\definecolor{cobalt}{RGB}{61,89,171}
\newtheorem{theorem}{Theorem}[section]
\newtheorem{corollary}[theorem]{Corollary}
\newtheorem{proposition}[theorem]{Proposition}
\theoremstyle{definition}
\newtheorem{definition}[theorem]{Definition}
\newtheorem{example}[theorem]{Example}
\theoremstyle{remark}
\newtheorem{remark}[theorem]{Remark}
\newcommand{\bR}{{\mathbb R}}
\newcommand{\bC}{{\mathbb C}}
\newcommand{\delbar}{\operatorname{\overline{\partial}}}
\newcommand{\del}{\operatorname{\partial}}
\newcommand{\db}{\operatorname{\overline{\partial}}}
\numberwithin{equation}{section}
\begin{document}

\title[Complex symplectic structures: deformations and cohomology]{Complex symplectic structures: deformations and cohomology}

\author[G. Bazzoni]{Giovanni Bazzoni}
\address{Dipartimento di Scienza ed Alta Tecnologia, Universit\`a degli Studi dell'Insubria, Via Valleggio 11, 22100, Como, Italy}
\email{giovanni.bazzoni@uninsubria.it}

\author[M. Freibert]{Marco Freibert}
\address{Fachbereich Mathematik, Universit\"at Hamburg, Bundesstra\ss e 55, D-20146 Hamburg, Germany}
\email{marco.freibert@uni-hamburg.de}

\author[A. Latorre]{Adela Latorre}
\address{Departamento de Matemática Aplicada, Universidad Politécnica de Madrid,
Avda. Juan de Herrera 4, 28040 Madrid, Spain}
\email{adela.latorre@upm.es}

\author[N. Tardini]{Nicoletta Tardini}
\address{Dipartimento di Scienze Matematiche, Fisiche e Informatiche, Unità di Matematica e Informatica, Università degli Studi di Parma, Parco Area delle Scienze 53/A, 43124 Parma, Italy}
\email{nicoletta.tardini@gmail.com}

\date{}


\begin{abstract}
We show that complex symplectic structures need not be preserved under small deformations, and we find sufficient conditions for this to happen. We study various cohomologies of compact complex symplectic manifolds, obtaining some topological obstructions to their existence.
\end{abstract}

\maketitle

\section{Introduction}

As the name eloquently suggests, complex symplectic structures are the complex analogue of symplectic structures. Indeed, a complex symplectic structure on a complex manifold $X$ is a holomorphic 2-form $\sigma$ which is closed and non-degenerate. In particular, the complex dimension of $X$ is even, say $2n$, and the canonical bundle of $X$ is trivialized by $\sigma^n$. Such structures exist canonically on the holomorphic cotangent bundle of every complex manifold and on the coadjoint orbits of every complex Lie group. Every hyperk\"ahler manifold (see \cite{Hitchin1,Huybrechts}) has an underlying complex symplectic structure, and the same holds for every hypersymplectic manifold (see \cite{DaSw,Hitchin2}). Complex symplectic structures appear in the study of (algebraic) integrable systems (see \cite{Boalch}). Quite recently, they have also been studied in the framework of (pseudo-)holomorphic Hamiltonian systems (see \cite{Wagner}); moreover, they have proved helpful in developing a Hamiltonian formalism for field theories (see \cite{BrFa}).

Similarly to what happens in real symplectic geometry, there are no general techniques for determining which (compact or non-compact) complex manifolds admit a complex symplectic structure. Therefore, the construction of examples is of great importance. Quite a few constructions have appeared over the last years, see for instance \cite{bazzoni-freibert-latorre-meinke,BFLT,BGGL}. These constructions take place on nilmanifolds and solvmanifolds, compact homogeneous spaces of nilpotent and solvable Lie groups, and produce non simply connected examples. To the best of our knowledge, the only examples of compact simply connected irreducible non-(hyper)K\"ahler complex symplectic manifolds of complex dimension at least four are due to Guan \cite{Guan1}, who recently proved that these examples are even not formal (see \cite{Guan2}). Interestingly enough, Guan's examples also start from a nilmanifold. We refer the reader to Section \ref{preliminaries} for the relevant preliminaries on complex symplectic structures and on nil/solvmanifolds.

In this paper we focus on two related aspects of compact complex symplectic manifolds:
\begin{itemize}
    \item the stability and closedness of the complex structure under small deformations;
    \item their cohomological property.
\end{itemize}

A compact complex symplectic surface is either a torus, or a K3 surface, or a primary Kodaira surface. It is well-known that, on each surface, all complex structures on such surfaces are deformation equivalent. Non-stability matters must therefore be addressed in complex dimension at least 4. It is probably known to the experts that complex symplectic structures are not stable under small deformations. However, we were not able to track down any explicit example of this behavior in the literature. We present several examples in Section \ref{non-stability}. In \cite{guan}, Guan gives a cohomological criterion ensuring stability, which was later generalized by Anthes and collaborators, see \cite{anthes-cattaneo-rollenske-tomassini}. In Section \ref{stability} we provide other criteria, involving the Bott-Chern cohomology and the $\mathcal{C}^\infty$-fullness of the complex structure, ensuring stability. We refer to Section \ref{stability}  for the definition of $\Delta(X_t,\omega_{\mathbb C})$.
\begin{theorem}[Theorems \ref{thm:stability-1}, \ref{thm:stability-2}]
Let $X=(M,J_0)$ be a compact complex manifold endowed with a complex symplectic form $\omega_{\mathbb C}$. Let $\{X_t\}_{t\in(-\varepsilon,\varepsilon)}$ be a differentiable family of deformations of $X_0=X$, where $\varepsilon>0$. Suppose that one of the following holds:
\begin{itemize}
\item the upper-semi-continuous function $t\mapsto h^{2,0}_{BC}(X_t)$ is constant;
\item for every $t\neq 0$ the manifold $X_t$ is complex-\,$\mathcal C^{\infty}$-full at the second stage and there is $\gamma_t\in\Delta(X_t,\omega_{\mathbb C})$ satisfying 
$\partial_t\bar{\partial}_t\big(\pi_t^{1,0}(\gamma_t)\big)=0$,
\end{itemize}
Then, the compact complex manifold $X_t$ admits a complex symplectic structure for any $t$ close enough to $0$.
\end{theorem}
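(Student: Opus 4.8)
The plan is to reduce both criteria to the single task of producing, for each $t$ close to $0$, a $d$-closed form $\sigma_t$ of type $(2,0)$ with respect to $J_t$ that converges to $\omega_{\mathbb C}$ in the $\mathcal C^\infty$-topology as $t\to 0$. Indeed, non-degeneracy of a closed holomorphic $2$-form $\sigma$ on the compact complex $2n$-manifold $M$ is the \emph{open} condition ``$\sigma^{n}$ nowhere zero in $\Lambda^{2n}\otimes\bC$''; so if $\sigma_t\to\omega_{\mathbb C}$ then $\sigma_t^{\,n}\to\omega_{\mathbb C}^{\,n}$ uniformly on $M$, and since $\omega_{\mathbb C}^{\,n}$ is nowhere zero it follows that $\sigma_t$ is complex symplectic on $X_t$ for $|t|$ small. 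Note also that no nonzero $(2,0)$-form lies in the image of $\partial\bar\partial$, so $H^{2,0}_{BC}(X_t)$ is exactly the space of $d$-closed holomorphic $2$-forms on $X_t$ (equivalently, the $\Delta_{BC,t}$-harmonic $(2,0)$-forms, for any Hermitian metric), and $h^{2,0}_{BC}(X_0)\ge 1$ since $\omega_{\mathbb C}$ is such a form. The problem is thus to propagate this Bott--Chern class, represented at $t=0$ by $\omega_{\mathbb C}$, to nearby $t$ together with a nearby representative.

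\emph{First criterion.} Pick a smooth family of Hermitian metrics on $X_t$ and let $\Delta_{BC,t}$ be the induced Bott--Chern Laplacians on $(2,0)$-forms, a smooth family of elliptic self-adjoint operators; hence $t\mapsto\dim\ker\Delta_{BC,t}=h^{2,0}_{BC}(X_t)$ is upper semicontinuous. If it is constant, the kernels $\cH^{2,0}_{BC}(X_t)$ form a smooth complex vector bundle over $(-\varepsilon,\varepsilon)$ (Kodaira--Spencer-type argument, applied to this fourth-order operator). Since $\omega_{\mathbb C}$ is $d$-closed of type $(2,0)$ at $t=0$ it is $\Delta_{BC,0}$-harmonic, hence lies in the fibre over $0$; extend it to a smooth section $t\mapsto\sigma_t$ of this bundle. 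Each $\sigma_t$ is a $d$-closed holomorphic $2$-form on $X_t$ with $\sigma_0=\omega_{\mathbb C}$, so by the first paragraph $\sigma_t$ is complex symplectic on $X_t$ for $|t|$ small.

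\emph{Second criterion.} Decompose the fixed closed form in types with respect to $J_t$: $\omega_{\mathbb C}=\omega_t^{2,0}+\omega_t^{1,1}+\omega_t^{0,2}$, so $\omega_t^{2,0}\to\omega_{\mathbb C}$ and $\omega_t^{1,1},\omega_t^{0,2}\to 0$ as $t\to 0$; comparing types in $d\omega_{\mathbb C}=0$ yields $\partial_t\omega_t^{2,0}=0$, $\bar\partial_t\omega_t^{0,2}=0$, and $\bar\partial_t\omega_t^{2,0}=-\partial_t\omega_t^{1,1}$, $\partial_t\omega_t^{0,2}=-\bar\partial_t\omega_t^{1,1}$. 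Thus $\omega_t^{2,0}$ is already a non-degenerate, $\partial_t$-closed $(2,0)$-form, and the only remaining obstruction is $\bar\partial_t\omega_t^{2,0}$. The idea is to correct $\omega_{\mathbb C}$ by an exact form $d\beta_t$, with $\beta_t\to 0$ as $t\to 0$, so that $\sigma_t:=\omega_{\mathbb C}-d\beta_t$ becomes of pure type $(2,0)$ on $X_t$; it is then automatically a $d$-closed holomorphic $2$-form converging to $\omega_{\mathbb C}$, hence complex symplectic for $|t|$ small. Complex-$\mathcal C^\infty$-fullness at the second stage lets us represent $[\omega_{\mathbb C}]\in H^2_{dR}(X_t;\bC)$ as a sum of Bott--Chern classes of $d$-closed pure-type forms, while the given $\gamma_t\in\Delta(X_t,\omega_{\mathbb C})$ provides the $(0,2)$-primitive and, via $\partial_t\bar\partial_t(\pi_t^{1,0}(\gamma_t))=0$ and the $\partial_t\bar\partial_t$-type solvability this grants, the $(1,1)$-correction needed to assemble $\beta_t$ explicitly.

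\emph{Main difficulty.} For the first criterion the only nontrivial input is the smooth dependence of the Bott--Chern harmonic $(2,0)$-forms once $h^{2,0}_{BC}$ is constant. For the second criterion the crux is twofold: (i) pinning down how complex-$\mathcal C^\infty$-fullness together with the vanishing $\partial_t\bar\partial_t(\pi_t^{1,0}(\gamma_t))=0$ produces a pure-type representative, i.e.\ solving the relevant $\bar\partial_t$- and $\partial_t\bar\partial_t$-equations on $X_t$ with the correct type of output; and (ii) the quantitative estimate $\beta_t\to 0$, which requires the associated Green/Neumann operators to vary continuously in $t$. I expect step (ii) — upgrading ``a correction exists'' to ``an arbitrarily small correction exists,'' so that non-degeneracy survives — to carry the technical weight of the proof.
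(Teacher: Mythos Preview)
Your treatment of the first criterion matches the paper's: choose a smooth family of Hermitian metrics, invoke Kodaira--Spencer so that constancy of $h^{2,0}_{BC}(X_t)$ makes the Bott--Chern harmonic $(2,0)$-spaces vary smoothly, and carry $\omega_{\mathbb C}$ along as a smooth section; non-degeneracy is then open. Nothing to add there.

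For the second criterion, however, your plan both takes an unnecessary detour and contains a genuine gap. You propose to find $\beta_t$ with $\beta_t\to 0$ such that $\sigma_t=\omega_{\mathbb C}-d\beta_t$ is of pure type $(2,0)$ on $X_t$. But this is generally impossible: from the fullness decomposition $\omega_{\mathbb C}=\alpha_t^{2,0}+\alpha_t^{1,1}+\alpha_t^{0,2}+d\gamma_t$ one has $(\omega_{\mathbb C})_t^{1,1}=\alpha_t^{1,1}+(d\gamma_t)^{1,1}$ and $(\omega_{\mathbb C})_t^{0,2}=\alpha_t^{0,2}+(d\gamma_t)^{0,2}$, and the $d$-closed forms $\alpha_t^{1,1},\alpha_t^{0,2}$ may represent nonzero de Rham classes, so the $(1,1)$ and $(0,2)$ components of $\omega_{\mathbb C}$ need not be $d$-exact. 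No choice of $\beta_t$ will kill them. Consequently the ``main difficulty (ii)'' you isolate --- estimating $\beta_t\to 0$ via Green operators --- is aimed at an object that may not exist.

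The paper sidesteps all of this by taking as candidate not some corrected $\omega_{\mathbb C}-d\beta_t$, but simply the canonical projection $(\omega_{\mathbb C})_t^{2,0}=\pi_t^{2,0}(\omega_{\mathbb C})$ itself. This form is automatically smooth in $t$ with value $\omega_{\mathbb C}$ at $t=0$, so non-degeneracy for small $t$ is free; the only thing to check is $d$-closedness. Comparing the unique bidegree decomposition with the fullness decomposition gives
\[
(\omega_{\mathbb C})_t^{2,0}=\alpha_t^{2,0}+\partial_t\big(\pi_t^{1,0}(\gamma_t)\big),
\]
whence $d(\omega_{\mathbb C})_t^{2,0}=\bar\partial_t\partial_t\big(\pi_t^{1,0}(\gamma_t)\big)=-\partial_t\bar\partial_t\big(\pi_t^{1,0}(\gamma_t)\big)=0$ by hypothesis. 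That one-line computation is the entire content of the second criterion; no analytic estimates enter.
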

Moreover, in Proposition \ref{prop:BC-const} we give sufficient conditions for the function $t\mapsto h^{2,0}_{BC}(X_t)$ to be constant.

In Section \ref{non-closed} we show that the property of being complex symplectic is not closed under small deformations (see Theorem \ref{hs-no-cerrada}). In Section \ref{cohomology}, we derive some inequalities on the dimensions of de Rham, Dolbeault, Bott-Chern, and Aeppli cohomology of a compact complex symplectic manifold, culminating in some topological obstructions to their existence. 

\begin{theorem}[Theorem \ref{th:cohomologies}, Corollary \ref{co:bettinumbers}]
Let $M$ be a $4n$-dimensional compact smooth manifold and let $(J,\omega_\mathbb{C})$ be a complex symplectic structure on $M$ and put $X=(M,J)$. For all $0\leq k,m\leq n$ $\omega_\mathbb{C}^k\wedge\bar \omega_\mathbb{C}^m$ defines a non trivial cohomology class in 
\[
H^{2(k+m)}_{dR}(M)\,, \ H^{2k,2m}_{\delbar}(X)\,, \ H^{2k,2m}_{\del}(X)\,, \ H^{2k,2m}_{BC}(X) \ \mathrm{and} \ H^{2k,2m}_{A}(X) 
\]
for all $0\leq k,m\leq n$. In particular, we have the following inequalities
\begin{itemize}
\item $h^{2k,2m}_{\delbar}(X)\geq 1$\,,
\item $h^{2k,2m}_{\del}(X)\geq 1$\,,
\item $h^{2k,2m}_{BC}(X)\geq 1$\,,
\item $h^{2k,2m}_{A}(X)\geq 1$\,,
\end{itemize}
for all $0\leq k,m\leq n$.\\
Moreover, for the Betti numbers we have
\begin{equation*}
b_{2k}(M)\geq k+1,\qquad b_{2l}(M)\geq 2n-l+1
\end{equation*}
for every $k=1,\ldots, n$ and every $l=n+1,\ldots,2n-1$.
\end{theorem}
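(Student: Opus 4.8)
The plan is to reduce everything to one elementary observation about the form $\Omega:=\omega_{\mathbb{C}}^{n}\wedge\bar\omega_{\mathbb{C}}^{n}$. Since $\dim_{\mathbb{R}}M=4n$, the complex dimension of $X=(M,J)$ is $2n$, and non-degeneracy of $\omega_{\mathbb{C}}$ means precisely that $\omega_{\mathbb{C}}^{n}$ is a nowhere-vanishing section of the canonical bundle $\Lambda^{2n,0}X$. Writing $\omega_{\mathbb{C}}^{n}=f\,dz_{1}\wedge\cdots\wedge dz_{2n}$ locally with $f$ nowhere zero, one checks that $\Omega$ is a nowhere-zero $(2n,2n)$-form equal to a fixed nonzero constant times $|f|^{2}$ times the Euclidean volume element; hence $\Omega$ orients $M$ and, $M$ being compact, $\int_{M}\Omega\neq 0$. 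I would begin by recording this, together with the remark that, since $\omega_{\mathbb{C}}$ is holomorphic and $d$-closed, $\partial\omega_{\mathbb{C}}=\bar\partial\omega_{\mathbb{C}}=\partial\bar\omega_{\mathbb{C}}=\bar\partial\bar\omega_{\mathbb{C}}=0$; consequently, for all $0\le k,m\le n$, the $(2k,2m)$-form $\omega_{\mathbb{C}}^{k}\wedge\bar\omega_{\mathbb{C}}^{m}$ is simultaneously $d$-, $\partial$-, $\bar\partial$- and $\partial\bar\partial$-closed, so it genuinely represents classes in $H^{2(k+m)}_{dR}(M)$, $H^{2k,2m}_{\bar\partial}(X)$, $H^{2k,2m}_{\partial}(X)$, $H^{2k,2m}_{BC}(X)$ and $H^{2k,2m}_{A}(X)$. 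This disposes of the well-definedness half of the statement.

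Next I would establish non-triviality uniformly across the five cohomologies. Fix $0\le k,m\le n$ and set $\psi:=\omega_{\mathbb{C}}^{\,n-k}\wedge\bar\omega_{\mathbb{C}}^{\,n-m}$, which is a legitimate form precisely because $k,m\le n$, is again $d$-, $\partial$- and $\bar\partial$-closed, and satisfies $\omega_{\mathbb{C}}^{k}\wedge\bar\omega_{\mathbb{C}}^{m}\wedge\psi=\Omega$. Suppose, for a contradiction, that $\omega_{\mathbb{C}}^{k}\wedge\bar\omega_{\mathbb{C}}^{m}$ were a coboundary in one of the five theories, i.e.\ equal to $d\alpha$, $\bar\partial\beta$, $\partial\beta$, $\partial\bar\partial\eta$, or $\partial\mu+\bar\partial\nu$. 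Wedging with $\psi$ and using that $\psi$ is annihilated by $d$, $\partial$ and $\bar\partial$ gives, up to sign, that $\Omega$ equals $d(\alpha\wedge\psi)$, $\bar\partial(\beta\wedge\psi)$, $\partial(\beta\wedge\psi)$, $\partial\bar\partial(\eta\wedge\psi)$, or $\partial(\mu\wedge\psi)+\bar\partial(\nu\wedge\psi)$ respectively; and in each case the right-hand side is globally $d$-exact, because in top degree $4n$ the complementary-bidegree components of the differential vanish, so that $\bar\partial\gamma=d\gamma$, $\partial\gamma=d\gamma$, $\partial\bar\partial\theta=-d(\partial\theta)$ and $\partial\mu'+\bar\partial\nu'=d(\mu'+\nu')$. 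By Stokes' theorem this forces $\int_{M}\Omega=0$, a contradiction. Hence all five classes are non-trivial, which in particular yields $h^{2k,2m}_{\bar\partial}(X)$, $h^{2k,2m}_{\partial}(X)$, $h^{2k,2m}_{BC}(X)$, $h^{2k,2m}_{A}(X)\ge 1$ for all $0\le k,m\le n$.

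For the Betti numbers I would argue as follows. Fix $1\le k\le n$ and consider the $k+1$ de Rham classes of the forms $\eta_{a}:=\omega_{\mathbb{C}}^{a}\wedge\bar\omega_{\mathbb{C}}^{\,k-a}$, $a=0,\dots,k$ (all legitimate since $a,k-a\le k\le n$), which lie in $H^{2k}_{dR}(M;\mathbb{C})$. To prove they are linearly independent, pair each $\eta_{a}$ with the closed forms $\zeta_{a_{0}}:=\omega_{\mathbb{C}}^{\,n-a_{0}}\wedge\bar\omega_{\mathbb{C}}^{\,n-k+a_{0}}$, $a_{0}=0,\dots,k$ (legitimate by the same inequalities): since $\omega_{\mathbb{C}}^{\,n+1}=\bar\omega_{\mathbb{C}}^{\,n+1}=0$ for bidegree reasons, $\eta_{a}\wedge\zeta_{a_{0}}=\omega_{\mathbb{C}}^{\,n+a-a_{0}}\wedge\bar\omega_{\mathbb{C}}^{\,n-a+a_{0}}$ equals $\Omega$ if $a=a_{0}$ and vanishes otherwise. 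Thus, if $\sum_{a}c_{a}[\eta_{a}]=0$, writing $\sum_{a}c_{a}\eta_{a}=d\alpha$, wedging with $\zeta_{a_{0}}$ and applying Stokes as above gives $c_{a_{0}}\int_{M}\Omega=0$, hence $c_{a_{0}}=0$ for every $a_{0}$; therefore $b_{2k}(M)=\dim_{\mathbb{C}}H^{2k}_{dR}(M;\mathbb{C})\ge k+1$. Finally, since $M$ is a compact oriented $4n$-manifold, Poincar\'e duality gives $b_{2l}(M)=b_{4n-2l}(M)=b_{2(2n-l)}(M)$, and for $n+1\le l\le 2n-1$ one has $1\le 2n-l\le n-1$, so the bound just proved gives $b_{2l}(M)\ge(2n-l)+1$.

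I do not expect a genuine obstacle. The only place that needs a little care is the linear-independence step for the Betti numbers — choosing the ``dual'' closed forms $\zeta_{a_{0}}$ so that wedging against the $\eta_{a}$ yields a Kronecker-delta pairing with $\Omega$, and keeping track of the elementary degree reasons that make all the exponents and products meaningful; everything else follows at once from the fact that $\omega_{\mathbb{C}}^{n}$ trivializes the canonical bundle (so $\int_{M}\Omega\neq0$) together with Stokes' theorem.
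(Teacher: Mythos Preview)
Your proof is correct. For the non-triviality of the classes, your argument is essentially the paper's: wedge $\omega_{\mathbb{C}}^{k}\wedge\bar\omega_{\mathbb{C}}^{m}$ against $\omega_{\mathbb{C}}^{n-k}\wedge\bar\omega_{\mathbb{C}}^{n-m}$ to reach the volume form and invoke Stokes; the paper organises this by first treating the top bidegree $(2n,2n)$ in each theory and then descending via multiplicativity (with Aeppli handled by the same wedge-to-top trick you use), but the substance is identical.

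For the Betti-number inequalities you take a genuinely different route. The paper observes that the identity induces a well-defined linear map $H^{2l}_{dR}(M;\mathbb{C})\to\bigoplus_{k+m=l}H^{2k,2m}_{A}(X)$ and that the images $[\omega_{\mathbb{C}}^{k}\wedge\bar\omega_{\mathbb{C}}^{m}]_{A}$ are non-zero and lie in distinct bidegree summands, hence are linearly independent; this covers all $l\in\{0,\dots,2n\}$ at once. You instead build explicit Poincar\'e-dual forms $\zeta_{a_{0}}=\omega_{\mathbb{C}}^{n-a_{0}}\wedge\bar\omega_{\mathbb{C}}^{\,n-k+a_{0}}$ and use the Kronecker-delta pairing $\int_{M}\eta_{a}\wedge\zeta_{a_{0}}=\delta_{a,a_{0}}\int_{M}\Omega$, then appeal to Poincar\'e duality on the oriented manifold for the range $l>n$. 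Your argument is more self-contained---it does not need the de~Rham--Aeppli map or any of the bigraded cohomologies for this step---while the paper's is shorter once that map is taken for granted and avoids the case split on $l$.
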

Finally, we give an example of a compact complex symplectic manifold whose Fr\"olicher spectral sequence does not degenerate at the second page.

\medskip

\noindent {\bf Acknowledgements.} G.~Bazzoni is partially supported by the PRIN 2022 project “Interactions between Geometric Structures and Function Theories” (code 2022MWPMAB) and by the GNSAGA of INdAM. A. Latorre is partially
supported by grant PID2023-148446NB-I00, funded by
MICIU/AEI/10.13039/501100011033 and by ERDF/EU. N. Tardini has been supported by the PRIN 2022 project “Real and Complex Manifolds: Geometry and Holomorphic Dynamics” (code 2022AP8HZ9), by the GNSAGA of INdAM and by University of Parma through the action Bando di Ateneo 2023 per la ricerca.

\section{Preliminaries}\label{preliminaries}



\subsection{Complex symplectic structures} Thanks to the celebrated Newlander-Nirenberg theorem, a complex manifold $X$ of complex dimension $n$ can be equivalently described as a pair $(M,J)$ where $M$ is a smooth $2n$-dimensional manifold and $J$ is an almost complex structure, that is $J\colon \mathfrak X(M)\to \mathfrak X(M)$ such that $J^2=-\mathrm{Id}$, which is integrable, i.e.
\[
[JX,JY]=[X,Y]+J[JX,Y]+J[X,JY], \text{ for every }X,Y\in\mathfrak X(M)\,;
\]
here $\mathfrak X(M)$ denotes the smooth vector fields on $M$. One can equivalently define $J$ on $\Omega^1(M)$, the smooth $1$-forms on $M$, by putting $(J\alpha)(X)\coloneqq\alpha(JX)$, for every $\alpha\in\Omega^1(M)$ and $X\in\mathfrak X(M)$. Then, extending $J$ to the complexified space $\Omega^1(M,\mathbb C)\coloneqq\Omega^1(M)\otimes\mathbb C$, one has
$$\Omega^1(M,\mathbb C)=\Omega_J^{1,0}(M)\oplus\Omega_J^{0,1}(M),$$
with $\Omega_J^{1,0}(M)=\{\omega\in\Omega^1(M,\mathbb C)\mid J\omega=i\,\omega\}$ and 
$\Omega_J^{0,1}(M)=\overline{\Omega_J^{1,0}(M)}$. This induces a bigrading on the space of complexified $k$-forms, for each $1\leq k\leq 2n$,
\begin{equation}\label{bigraduation}
\Omega^{k}(M,\mathbb C)=\bigoplus_{p+q=k}\Omega_J^{p,q}(M).
\end{equation}
When it is clear from the context, we will omit the reference to $J$ in $\Omega_J^{p,q}(M)$ and simply write $\Omega^{p,q}(M)$.

Now a complex symplectic structure on a complex manifold $X$ is a holomorphic 2-form $\sigma$ which is closed and non-degenerate; notice that $X$ has then even complex dimension. The above description of complex manifolds, specialized to the complex symplectic case, yields the following definition.

\begin{definition}
Let $M$ be a smooth manifold. A \emph{complex symplectic structure} on $M$ is a pair $(J,\omega_{\mathbb C})$ consisting of a complex structure $J$ and a closed non-degenerate $2$-form $\omega_{\mathbb C}\in\Omega^2(M,\mathbb C)$ of bidegree $(2,0)$ with respect to the bigrading induced by $J$ on $\Omega^*(M,\mathbb C)$. We will then say that $\omega_{\mathbb C}$ is a \emph{complex symplectic form} on the complex manifold $(M,J)$, and $(M,J,\omega_{\mathbb C})$ is a \emph{complex symplectic manifold}.
\end{definition}

As we remarked above, the existence of a complex symplectic structure $(J,\omega_{\mathbb C})$ on $M$ implies that the complex dimension of $(M,J)$ is even. 

As shown in \cite[Lemma 3.2]{bazzoni-freibert-latorre-meinke} there is a third, equivalent, way to define a complex symplectic structure. On a smooth manifold $M$, a complex symplectic structure is a pair $(J,\omega)$ where
\begin{itemize}
    \item $J$ is a complex structure;
    \item $\omega$ is a {\em real} symplectic form, i.e. $\omega\in\Omega^2(M)$ is closed and non-degenerate;
\end{itemize}
subject to the compatibility condition
\[
\omega(JX,Y)=\omega(X,JY)\,, \ \textrm{for every }X,Y\in\mathfrak X(M)\,.
\]
In fact, the bijection between the pairs $(J,\omega_{\mathbb C})$ and $(J,\omega)$ is given by $(J,\omega_{\mathbb C})\mapsto (J,\mathrm{Re}(\omega_{\mathbb C}))$ with inverse $(J,\omega)\mapsto (J,\omega-i\,\omega(J\cdot\,,\,\cdot))$.

\subsection{Solvmanifolds and nilmanifolds} In this paper we consider nilmanifolds and solvmanifolds. Recall that a \emph{solvmanifold} $\Gamma\backslash G$ is a compact quotient of a connected and simply connected solvable Lie group $G$ and a discrete subgroup $\Gamma\subset G$. When the Lie group $G$ is nilpotent, $\Gamma\backslash G$ is said to be a \emph{nilmanifold}. Solvmanifolds and nilmanifolds are of special interest in geometry, as certain geometric structures on $\Gamma\backslash G$ can be studied on the Lie algebra $\mathfrak g$ of $G$. For instance, a complex structure $J$ on $\Gamma\backslash G$ is called \emph{invariant} if it comes from a complex structure defined on the Lie algebra $\mathfrak g$ of $G$. Similarly, one can define the notion of invariant complex symplectic structure. The following result, proved in~\cite[Proposition 6.1]{bazzoni-freibert-latorre-meinke}, shows that the existence of complex symplectic forms on solvmanifolds with invariant complex structures can sometimes be detected at the Lie algebra level.

\begin{proposition}\label{prop:BFLM}{\rm\cite{bazzoni-freibert-latorre-meinke}}
Let $\Gamma\backslash G$ be a solvmanifold satisfying $H^{\bullet}_{\mathrm{dR}}(\Gamma\backslash G)\cong H^{\bullet}(\mathfrak g^*)$, and let $J$ be an invariant complex structure on $\Gamma\backslash G$. If $(\Gamma\backslash G,J)$ admits a complex symplectic form $\omega$ (not necessarily invariant), then there is an invariant complex symplectic form $\omega'$ on $\Gamma\backslash G$.
\end{proposition}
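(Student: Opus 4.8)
The plan is to use a symmetrization (averaging) argument over the compact group $\Gamma\backslash G$, combined with the identification $H^{\bullet}_{\mathrm{dR}}(\Gamma\backslash G)\cong H^{\bullet}(\mathfrak g^*)$ to control the cohomology class of the symmetrized form. First I would recall the standard symmetrization operator: there is a $G$-invariant probability measure $\mu$ on $\Gamma\backslash G$ coming from a bi-invariant measure on $G$ (which exists since $G$ is unimodular, being solvable-of-the-right-type for a lattice to exist — more precisely, the existence of $\Gamma$ forces unimodularity), and one defines for any differential form $\eta$ on $\Gamma\backslash G$ its symmetrization $\eta^{\mathrm{sym}}$ by averaging the pullbacks $L_g^*\eta$ over $g\in G$ against $\mu$. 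The output $\eta^{\mathrm{sym}}$ is an invariant form, i.e. lies in $\Lambda^\bullet \mathfrak g^*$, the operator commutes with $d$, and it is the identity on invariant forms; hence on de Rham cohomology it realizes the projection onto $H^\bullet(\mathfrak g^*)$ under the hypothesis $H^{\bullet}_{\mathrm{dR}}(\Gamma\backslash G)\cong H^{\bullet}(\mathfrak g^*)$.

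Next I would check that symmetrization is compatible with the bigrading induced by the \emph{invariant} complex structure $J$. Since $J$ comes from $\mathfrak g$, it is $G$-invariant, so the splitting $\Omega^1(M,\mathbb C)=\Omega^{1,0}_J(M)\oplus\Omega^{0,1}_J(M)$ is preserved by each $L_g^*$; therefore averaging preserves bidegree, and in particular $\omega\in\Omega^{2,0}_J(M)$ implies $\omega^{\mathrm{sym}}\in\Omega^{2,0}_J(M)\cap\Lambda^2\mathfrak g^*$. Because $d\omega=0$ and symmetrization commutes with $d$, we get $d\omega^{\mathrm{sym}}=0$. Set $\omega'\coloneqq\omega^{\mathrm{sym}}$: it is an invariant, closed, $(2,0)$-form, so the only remaining point is non-degeneracy.

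The main obstacle is precisely showing that $\omega'$ is non-degenerate; symmetrization is a linear averaging process and there is no a priori reason it should send a non-degenerate form to a non-degenerate one. Here I would exploit the top-degree consequence of non-degeneracy: if $\dim_{\mathbb C}(M,J)=2n$, then $\omega^n$ is a nowhere-vanishing section of the canonical bundle, and more usefully $\omega^n\wedge\bar\omega^n$ is a nowhere-vanishing real $4n$-form, so $\int_{\Gamma\backslash G}\omega^n\wedge\bar\omega^n\neq 0$. Now $[\omega]\in H^{2,0}_{\delbar}$ — or better, working in de Rham cohomology, since $\omega$ is closed its class $[\omega]\in H^2_{dR}(\Gamma\backslash G;\mathbb C)$ is represented by the invariant form $\omega'$ as well, because $\omega-\omega'$ is exact (symmetrization does not change the de Rham class under our hypothesis). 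Hence $[\omega^n\wedge\bar\omega^n]=[\omega]^n\cup[\bar\omega]^n=[\omega']^n\cup[\bar\omega']^n=[(\omega')^n\wedge(\bar\omega')^n]$ in $H^{4n}_{dR}$, and integrating, $\int (\omega')^n\wedge(\bar\omega')^n=\int\omega^n\wedge\bar\omega^n\neq 0$. Therefore $(\omega')^n\wedge(\bar\omega')^n$ is not identically zero; but it is an \emph{invariant} $4n$-form on a $4n$-manifold, hence either identically zero or nowhere zero, so it is nowhere zero, which forces $(\omega')^n\neq 0$ everywhere, i.e. $\omega'$ is non-degenerate. This shows $(J,\omega')$ is an invariant complex symplectic structure and completes the proof. (One should remark that the hypothesis $H^{\bullet}_{\mathrm{dR}}(\Gamma\backslash G)\cong H^{\bullet}(\mathfrak g^*)$ is what makes the equality of de Rham classes $[\omega]=[\omega']$ legitimate; for nilmanifolds this is Nomizu's theorem, and for the solvmanifolds considered here it is part of the standing assumption.)
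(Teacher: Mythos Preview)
The paper does not actually prove Proposition~\ref{prop:BFLM}; it only quotes it from \cite{bazzoni-freibert-latorre-meinke}. So there is no in-paper argument to compare against. That said, your averaging/symmetrization strategy is the standard one and is essentially the argument given in the cited reference: average $\omega$ to an invariant closed $(2,0)$-form $\omega'$, observe that $[\omega]=[\omega']$ in de Rham cohomology thanks to the hypothesis $H^{\bullet}_{\mathrm{dR}}(\Gamma\backslash G)\cong H^{\bullet}(\mathfrak g^*)$, and deduce non-degeneracy of $\omega'$ from $\int (\omega')^n\wedge(\bar\omega')^n=\int \omega^n\wedge\bar\omega^n\neq 0$ together with the fact that an invariant top form is either identically zero or nowhere zero. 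This is correct.

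One point deserves tightening. Your description of the symmetrization as ``averaging the pullbacks $L_g^*\eta$ over $g\in G$ against $\mu$'' is not well-posed: left translations $L_g$ do \emph{not} act on $\Gamma\backslash G$ (unless $\Gamma$ is normal), and $G$ is noncompact so you cannot average over it. The operator you want is the Belgun-type averaging: using the trivialisation of $T(\Gamma\backslash G)$ by left-invariant vector fields, a $k$-form $\eta$ is a smooth map $\Gamma\backslash G\to \Lambda^k\mathfrak g^*$, and one sets $\eta^{\mathrm{sym}}\coloneqq\int_{\Gamma\backslash G}\eta\,d\mu\in\Lambda^k\mathfrak g^*$, where $\mu$ is the bi-invariant probability measure. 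This map commutes with $d$, is the identity on invariant forms, and preserves the $(p,q)$-bigrading when $J$ is invariant, which is exactly what the rest of your argument uses. With this correction your proof goes through.
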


Here $H^{\bullet}(\mathfrak g^*)$ denotes the Lie algebra cohomology of $\mathfrak{g}$. This useful result, which applies in particular to all nilmanifolds (by Nomizu theorem, \cite{Nomizu}) and to completely solvable\footnote{A solvmanifold $\Gamma\backslash G$ is {\em completely solvable} if $\mathrm{ad}_X$ has real eigenvalues, for each $X\in\mathfrak{g}$.} solvmanifolds (by Hattori theorem, \cite{Hattori}), can be used to exclude the existence of complex symplectic structures along holomorphic deformations.

\subsection{Deformations of complex manifolds} A \emph{holomorphic family} of compact complex manifolds is a proper holomorphic submersion $\pi\colon\mathcal X\to\Delta$ between two complex manifolds $\mathcal X$ and $\Delta$. The fibers $X_{\mathbf t}=\pi^{-1}(\mathbf t)$ are compact complex manifolds of the same dimension. If $\Delta$ is a contractible manifold (for instance, a polydisc), by~\cite{ehresmann} all the fibers have the same underlying smooth manifold $M$. This allows to view the holomorphic family as a collection of complex manifolds 
$\{X_{\mathbf t}\coloneqq(M,J_{\mathbf t})\}_{\mathbf t\in\Delta}$ where each $J_{\mathbf t}$ is a complex structure on $M$. 
In particular, observe that the right-hand side of \eqref{bigraduation} can vary along the deformation, whereas the-left hand side remains the same.

Let $\mathcal P$ be a property related to compact complex manifolds, such as the dimension of a certain cohomology group or the existence of a specific type of geometric structure. Let $\Delta$ be an open disc around the origin in $\mathbb C$. The property $\mathcal P$ is said to be \emph{stable} if for any holomorphic family of compact complex manifolds $(M,J_{\mathbf t})_{\mathbf t\in\Delta}$ such that the manifold $(M,J_{\mathbf t_0})$ satisfies $\mathcal P$, for some $\mathbf t_0\in\Delta$, one has that every $(M,J_{\mathbf t})$ has the property $\mathcal P$, for $\mathbf t\in\Delta$ sufficiently close to $\mathbf t_0$. A trivial example of a stable property is the value of Betti numbers, since the de Rham cohomology groups are a topological invariant of manifolds. In contrast, Hodge numbers can vary along holomorphic deformations, as shown by Nakamura in~\cite{nakamura}, hence their values are not a stable property. A highly non trivial example of a stable property is the existence of a K\"ahler metric, as proved by Kodaira and Spencer in the 1960s~\cite{kodaira-spencer}. 

Finally, a property $\mathcal P$ of compact complex manifolds is said to be \emph{closed} if whenever a holomorphic family of compact complex manifolds $(M,J_{\mathbf t})_{\mathbf t\in\Delta}$ satisfies $\mathcal P$, for $\mathbf t\in\Delta$ sufficiently close to $\mathbf t_0$, it follows that $(M,J_{\mathbf t_0})$ satisfies $\mathcal P$.



%
%

\section{Non-stability result}\label{non-stability}

In this section we present three explicit examples of small deformations of complex symplectic manifolds which are not complex symplectic. The first two examples are a complex symplectic nilmanifold and solvmanifold of which no small deformation is complex symplectic. The third example is more complicated, as the persistence of complex symplectic structures depends on the specific deformation within a general class.

\begin{example}\label{ex:deformation-1-NLA}
Let $(\Gamma\backslash G,J_0)$ be an $8$-dimensional nilmanifold with invariant complex structure defined by the complex structure equations
\begin{equation}\label{ex:initial-CSnilm-1}
 d\varphi^1=d\varphi^2=0, \quad d\varphi^3=\varphi^{1\bar1}, \quad d\varphi^4=\varphi^{12}.
\end{equation}
This means that $\{\varphi^1,\ldots,\varphi^4\}$ is a basis of $\mathfrak{g}^{(1,0)}\subset\mathfrak{g}^*\otimes\bC$ and $d$ denotes the extension of the Chevalley-Eilenberg differential to $\mathfrak{g}\otimes\bC$. Moreover, $\varphi^{1\bar 1}\coloneqq\varphi^1\wedge \varphi^{\bar1}$ and $\varphi^{12}\coloneqq\varphi^1\wedge\varphi^2$; we will use this notation throughout. A direct computation from~\eqref{ex:initial-CSnilm-1} shows that
\[
\omega_{\mathbb C}=A\,\varphi^{12}+B\,\varphi^{13}+C\,\varphi^{14}+D\,\varphi^{24}, \text{ with }A,B,C,D\in\mathbb C, \ BD\neq 0\,,
\]
is a closed $(2,0)$-form on $(\Gamma\backslash G,J_0)$. Moreover, one can check that $\omega_{\mathbb C}$ is non-degenerate, so $(\Gamma\backslash G,J_0,\omega_{\mathbb C})$ is a complex symplectic manifold.

Observe that the $(0,1)$-form $\varphi^{\bar2}$ defines a Dolbeault cohomology class on $(\Gamma\backslash G,J_0)$. Hence, for each $\mathbf{t}\in\mathbb C$ with $|\mathbf{t}|<1$, a complex structure $J_\mathbf{t}$ on $\Gamma\backslash G$ can  be defined using the following basis of $(1,0)$-forms:
\[
\varphi_{\mathbf{t}}^1\coloneqq\varphi^1, \quad 
\varphi_{\mathbf{t}}^2\coloneqq\varphi^2+\mathbf{t}\,\varphi^{\bar2},\quad
\varphi_{\mathbf{t}}^3\coloneqq\varphi^3,\quad 
\varphi_{\mathbf{t}}^4\coloneqq\varphi^4\,.
\]
Then, the complex structure equations of $(\Gamma\backslash G,J_{\mathbf{t}})$ are
\begin{equation}\label{ex:deformed-CSnilm-1}
d\varphi_{\mathbf{t}}^1=d\varphi_{\mathbf{t}}^2=0, \quad 
d\varphi_{\mathbf{t}}^3=\varphi_{\mathbf{t}}^{1\bar1}, \quad
d\varphi_{\mathbf{t}}^4=\frac{1}{1-|{\mathbf{t}}|^2}\,\varphi_{\mathbf{t}}^{12}
-\frac{{\mathbf{t}}}{1-|{\mathbf{t}}|^2}\,\varphi_{\mathbf{t}}^{1\bar2}.
\end{equation}
Any invariant $(2,0)$-form on $(\Gamma\backslash G,J_{\mathbf{t}})$ is then given by
\[
\Omega_{\mathbb C}=\alpha\,\varphi_{\mathbf{t}}^{12}
	+\beta\,\varphi_{\mathbf{t}}^{13}+\gamma\,\varphi_{\mathbf{t}}^{14}
	+\tau\,\varphi_{\mathbf{t}}^{23}+\theta\,\varphi_{\mathbf{t}}^{24}
	+\zeta\,\varphi_{\mathbf{t}}^{34}\,,
\]
where $\alpha,\beta,\gamma,\tau,\theta,\zeta\in\mathbb C$. Imposing closedness we have
\[
0=d\Omega_{\mathbb C}=-\frac{\zeta}{1-|\mathbf t|^2}\,\varphi^{123}
	+\tau\,\varphi^{12\bar1}-\frac{\mathbf t\,\theta}{1-|\mathbf t|^2}\,\varphi^{12\bar2}
	-\frac{\mathbf t\,\zeta}{1-|\mathbf t|^2}\,\varphi^{13\bar2}-\zeta\,\varphi^{14\bar1}\,,
\]
hence we need $\tau=\zeta=0$ and $\mathbf t\,\theta=0$. If $\mathbf t\neq 0$, then $\theta=0$ and $\Omega_{\mathbb C}^2=0$, so there is no invariant complex symplectic form on $(\Gamma\backslash G,J_{\mathbf{t}})$. By Nomizu's Theorem we know that $H^{\bullet}_{dR}(\Gamma\backslash G)\cong H^{\bullet}(\mathfrak g^*)$, so we can apply Proposition~\ref{prop:BFLM} to conclude that the manifold $(\Gamma\backslash G,J_{\mathbf{t}})$ is not complex symplectic for any $\mathbf t\neq 0$.
\end{example}

A similar situation can of course be obtained in the more general class of solvmanifolds, where one can not hope to compute de Rham cohomology using invariant forms, hence Proposition \ref{prop:BFLM} does not apply, and ad-hoc techniques are needed.

\begin{example}
Let $(M\coloneqq N\times\mathbb{T},J_0)$ be the product of the Nakamura manifold $N$ and a $1$-dimensional complex torus $\mathbb{T}$ with 
global co-frame of $(1,0)$-forms $\left\lbrace\varphi^1,\varphi^2,\varphi^3,\varphi^4\right\rbrace$ satisfying the
complex structure equations
\[
\left\lbrace
\begin{array}{lcl}
d\varphi^1 & =& -\varphi^{1\bar3}\\
d\varphi^2 & = & \varphi^{2\bar3}\\
d\varphi^3 & = & 0\\
d\varphi^4 & = & 0
\end{array}
\right.\,.
\]
Notice that this complex structure is abelian. Moreover, one sees easily that the underlying real Lie algebra is not completely solvable, hence we can not use Hattori Theorem to compute de Rham cohomology using invariant forms, and we can not apply Proposition \ref{prop:BFLM} to restrict to invariant complex symplectic structures. For $A,F\in\mathbb{C}$, $AF\neq 0$,
\[
\omega_{\mathbb{C}}=A\varphi^{12}+F\varphi^{34}
\]
is a $d$-closed non-degenerate holomorphic 2-form, hence it defines a complex symplectic structure on $M$. We consider the following small deformation of $J_0$,
\[
\varphi^1_t\coloneqq\varphi^1,\quad
\varphi^2_t\coloneqq\varphi^2,\quad
\varphi^3_t\coloneqq\varphi^3-t\varphi^{\bar 1},\quad
\varphi^4_t\coloneqq\varphi^4\,,
\]
where $t\in\bC$ is a parameter (see \cite[Theorem 3.3]{angella-otal-ugarte-villacampa}). We denote by $M_t$ the real manifold underlying $M$ endowed with the deformed complex structure, $J_t$. The complex structure equations of $M_t$ are
\[
\left\lbrace
\begin{array}{lcl}
d\varphi^1_t & =& -\varphi^{1\bar3}_t\\
d\varphi^2_t & = & -\bar t\varphi^{12}_t+ \varphi^{2\bar3}_t\\
d\varphi^3_t & = & -t\varphi^{3\bar 1}\\
d\varphi^4_t & = & 0
\end{array}
\right.\,.
\]
We proceed to show that $M_t$ does not admit any complex symplectic structure, if $t\neq 0$. Setting $\varphi^j_t=e^{2j-1}+ie^{2j}$, for $j=1,\ldots,4$, and $t=u+iv$, the real structure equations of the real Lie algebra $\mathfrak{g}$ underlying $M_t$ are $de^7=de^8=0$ together with
\[
\left\lbrace
\begin{array}{lcl}
de^1 & = & -e^{15}-e^{26}\\
de^2 & = & e^{16}-e^{25}\\
de^3 & = & u(-e^{13}+e^{24})-v(e^{14}+e^{23})+e^{35}+e^{46}\\
de^4 & = & -u(e^{23}+e^{14})+v(e^{13}-e^{24})-e^{36}+e^{45}\\
de^5 & = & u(e^{15}+e^{26})+v(-e^{16}+e^{25})\\
de^6 & = & u(e^{16}-e^{25})+v(e^{15}+e^{26})\\
\end{array}
\right.\,.
\]
For $j=1,\ldots,8$ we compute $\rho_j=\mathrm{tr}(J_t\mathrm{ad}(e_j))$, obtaining $\rho_1=-4v$, $\rho_2=4u$ and $\rho_j=0$ for $j=3,\ldots,8$. Since $e_1,e_2\in[\mathfrak{g},\mathfrak{g}]$, we see that if $t\neq 0$ the canonical bundle of $M_t$ is not torsion, by \cite[Theorem 5.3]{Andrada-Tolcachier}, hence $M_t$ does not admit any complex symplectic structure unless $t=0$.

\end{example}

As we have shown, the existence of complex symplectic structures is not a stable property. However, it is sometimes possible to find small deformations of complex symplectic manifolds that are still of complex symplectic type. To show this, we consider small deformations of the Iwasawa manifold $X_0=(M,J_0)$. This is a nilmanifold of complex dimension 3, quotient of the complex Heisenberg group by the lattice of Gauss integers, and is defined by the following complex structure equations:
\[
d\omega^1=d\omega^2=0, \qquad d\omega^3=\omega^{12}\,.
\]
The small deformations of the Iwasawa manifold $X_0$ have been studied by Nakamura in \cite{nakamura}. The deformation space is
\[
\Delta=\big\{\mathbf{t}=(t_{11},t_{12},t_{21},t_{22},t_{31},t_{32})\in \mathbb{C}^6 \mid \ |\mathbf{t}|< \varepsilon \big\}
\]
for $\varepsilon>0$ sufficiently small. The deformations are divided in three classes
\begin{description}
    \item[\it Class (i)] $t_{11}=t_{12}=t_{21}=t_{22}=0$;
    \item[\it Class (ii)] $\mathcal{D}(\mathbf{t})=0$ and $(t_{11},t_{12},t_{21},t_{22})\neq (0,0,0,0)$;
    \item[\it Class (iii)] $\mathcal{D}(\mathbf{t})\neq 0$,
\end{description}
where $\mathcal{D}(\mathbf{t})=t_{11}t_{22}-t_{12}t_{21}$. The deformations in Class $(i)$ are parallelizable, while those in Classes $(ii)$ and $(iii)$ are not. Moreover, the Hodge diamonds of the three deformation classes are different (albeit constant within each class), showing that the Hodge numbers are not stable under small deformations, as we pointed out above.

For Classes $(ii)$ and $(iii)$ one can proceed as in the proof of \cite[Theorem 3.1]{angella-tomassini} and write the complex structure equations of $X_\mathbf{t}$ as
\begin{equation}\label{estructura-Iwasawa}
\left\{
\begin{array}{llll}
d\varphi_{\mathbf{t}}^1 \!\!&\!\!=\!\!&\!\! d\varphi_{\mathbf{t}}^2 = 0,\\[4pt]
d\varphi_{\mathbf{t}}^3 \!\!&\!\!=\!\!&\!\! \sigma_{12}\,\varphi_{\mathbf{t}}^{12}
+ \sigma_{1\bar{1}}\,\varphi_{\mathbf{t}}^{1\bar{1}} + \sigma_{1\bar{2}}\,\varphi_{\mathbf{t}}^{1\bar{2}}
+ \sigma_{2\bar{1}}\,\varphi_{\mathbf{t}}^{2\bar{1}} + \sigma_{2\bar{2}}\,\varphi_{\mathbf{t}}^{2\bar{2}},
\end{array}
\right.
\end{equation}
where the coefficients
$\sigma_{12},\sigma_{1\bar{1}},\sigma_{1\bar{2}},\sigma_{2\bar{1}},\sigma_{2\bar{2}} \in \mathbb{C}$ only depend on $\mathbf{t}$,
and are given by
\begin{equation}\label{coef-estructura-Iwasawa}
\begin{array}{rcl}
\sigma_{12} \!\!&\!\!=\!\!&\!\! - \,\gamma - \bar{\alpha}\,|t_{22}|^2 + \frac{1}{\bar{\gamma}}\,(\sigma_{1\bar{1}}\,\bar{\sigma}_{2\bar{2}})\,,\\[6pt]
\sigma_{1\bar{1}} \!\!&\!\!=\!\!&\!\! \bar{\alpha}\,\bar{\gamma}\,\left(t_{21}+\bar{t}_{21}\mathcal{D}(\mathbf{t})\right)\,,\\[6pt]
\sigma_{1\bar{2}} \!\!&\!\!=\!\!&\!\! \bar{\alpha}\,\left(t_{22}+(t_{12}\,\bar{t}_{11}+t_{22}\,\bar{t}_{12})\,\sigma_{1\bar{1}}\right)\,,
\end{array}\qquad
\begin{array}{rcl}
\sigma_{2\bar{1}} \!\!&\!\!=\!\!&\!\! - \,\alpha\,\gamma\,(t_{11}-\bar{t}_{22}\mathcal{D}(\mathbf{t}))\,,\\[6pt]
\sigma_{2\bar{2}} \!\!&\!\!=\!\!&\!\! - \,\alpha\,\gamma\,(t_{12}+\bar{t}_{12}\mathcal{D}(\mathbf{t}))\,,
\end{array}
\end{equation}
with $\alpha$ and $\gamma$ satisfying
$$
\begin{array}{rcl}
\alpha \!\!&\!\!=\!\!&\!\! \displaystyle\frac{1}{1-|t_{22}|^2-t_{21}\,\bar{t}_{12}}\,,\\[8pt]
\gamma \!\!&\!\!=\!\!&\!\! \displaystyle\frac{1}{1-|t_{11}|^2-t_{12}\bar{t}_{21}-\alpha\,
     \left(|t_{11}|^2t_{21}\bar{t}_{12}+|t_{22}|^2t_{12}\bar{t}_{21}+2\mathfrak{Re}(t_{11}t_{22}\bar{t}_{12}\bar{t}_{21}\right))}\,.
\end{array}
$$
\noindent Observe that, compared to \cite{angella-tomassini}, some of the coefficients have been written in a different way, more suitable for our purposes.

\begin{example}\label{example-iwasawa}
Let $Y_{\mathbf t}=X_{\mathbf t}\times\mathbb T$, where $\mathbb T$ is a complex torus and
$\mathbf t\in\Delta$ with $t_{12}=t_{21}=0$. 
For sufficiently small $\mathbf t$, this is a deformation of the complex  manifold $Y_{\mathbf 0}$, which corresponds to the product of the Iwasawa manifold $X_{\mathbf 0}$ by $\mathbb T$.
Let $\varphi^4$ be an invariant $(1,0)$-form on the torus $\mathbb T$.
By \eqref{estructura-Iwasawa} and \eqref{coef-estructura-Iwasawa}, the structure equations
in terms of the basis $\{\varphi^k_{\mathbf{t}}\}_{k=1}^4$, where $\varphi^4_{\mathbf{t}}=\varphi^4$, are:
\begin{equation}\label{deformation-iwasawa}
\begin{cases}
d\varphi_{\mathbf{t}}^1 \ = \ d\varphi_{\mathbf{t}}^2 \ = \ 0,\\
d\varphi_{\mathbf{t}}^3 \ = \ -\frac{1-|t_{11}t_{22}|^2}{(1-|t_{11}|^2)\,(1-|t_{22}|^2)}\,\varphi_{\mathbf{t}}^{12} +
  \frac{t_{22}}{1-|t_{22}|^2}\,\varphi_{\mathbf{t}}^{1\bar2} -
   \frac{t_{11}}{1-|t_{11}|^2}\,\varphi_{\mathbf{t}}^{2\bar1},\\
d\varphi_{\mathbf{t}}^4 \ = \ 0.
\end{cases}
\end{equation}
Thanks to Proposition \ref{prop:BFLM}, the existence of complex symplectic structures on $Y_{\mathbf t}$ can be studied at the Lie algebra level. Let us consider a generic invariant $(2,0)$-form
\[
\omega_\bC=\alpha\,\varphi_{\mathbf{t}}^{12}+\beta\,\varphi_{\mathbf{t}}^{13}+\gamma\,\varphi_{\mathbf{t}}^{14}+\tau\,\varphi_{\mathbf{t}}^{23}+\theta\,\varphi_{\mathbf{t}}^{24}+\zeta\,\varphi_{\mathbf{t}}^{34}\,,
\]
where $\alpha,\beta,\gamma,\tau,\theta,\zeta\in\mathbb C$,
satisfying $d\omega_\bC=0$ and $\omega_\bC\wedge\omega_\bC\neq 0$. A direct calculation from~\eqref{deformation-iwasawa} shows that
\[\begin{array}{lcl}
d\omega_\bC &=& -\frac{\zeta\,(1-|t_{11}t_{22}|^2)}{(1-|t_{11}|^2)\,(1-|t_{22}|^2)}\,\varphi_{\mathbf{t}}^{124}
 +\frac{\beta\,t_{11}}{1-|t_{11}|^2}\,\varphi_{\mathbf{t}}^{12\bar1} 
 +\frac{\tau\,t_{22}}{1-|t_{22}|^2}\,\varphi_{\mathbf{t}}^{12\bar2} \\[4pt]
 && 
 -\frac{\zeta\,t_{22}}{1-|t_{22}|^2}\,\varphi_{\mathbf{t}}^{14\bar2}
 +\frac{\zeta\,t_{11}}{1-|t_{11}|^2}\,\varphi_{\mathbf{t}}^{24\bar1}
 \end{array}
\]
From here, it is clear that $\zeta=0$, as $|t_{11}t_{22}|^2$ is sufficiently small. 
Moreover, the following conditions arise:
$$\left\{\begin{array}{l}
\beta\,t_{11}=0,\\
\tau\,t_{22}=0,\\
\gamma\,\tau-\beta\,\theta\neq 0.
\end{array}\right.$$
Simply note that the last one comes from the non-degeneracy condition $\omega_\bC\wedge\omega_\bC\neq 0$ with $\zeta=0$.
One can then distinguish several cases:
\begin{itemize}
\item For $t_{11}=t_{22}=0$, it suffices to choose $\alpha,\beta,\gamma,\tau,\theta$ with $\gamma\,\tau-\beta\,\theta\neq 0$ to get a complex symplectic structure on $Y_{\mathbf 0}$ (see also \cite{Guan1}).
\item When $t_{11}=0$ but $t_{22}\neq 0$, one can take $\tau=0$ and $\alpha,\beta,\gamma,\theta$ such that $\beta\,\theta\neq 0$ to obtain a complex symplectic structure. 
\item If $t_{11}\neq 0$ and $t_{22}=0$, then $\beta=0$ and $\alpha,\gamma,\tau,\theta$ are free parameters that must satisfy $\gamma\,\tau\neq 0$ in order to provide an appropriate $\omega_\bC$ on the corresponding $Y_{\mathbf t}$.

\item If $t_{11}t_{22}\neq 0$, then one necessarily has $\beta=\tau=0$. However, this 
implies $\Omega\wedge\Omega=0$,
and there are no complex symplectic structures on $Y_{\mathbf t}$, by Proposition  \ref{prop:BFLM}.
\end{itemize}
As a consequence, one can clearly see that if we deform $Y_{\mathbf 0}$ along $t_{11}t_{22}=0$, then  the existence of complex symplectic structures is preserved. However, when this manifold is deformed along $t_{11}t_{22}\neq 0$, complex symplectic structures cease to exist.

\end{example}

\section{Stability results}\label{stability}

The examples of the previous section make us wonder whether it is possible to characterize the deformations of the complex structure of a complex symplectic manifold along which the existence of complex symplectic structures is preserved.

We recall two existing results giving sufficient conditions for {\em all} deformations of a complex symplectic manifold to be again complex symplectic, thus ensuring stability.

\begin{theorem}[Guan, \cite{guan}]\label{thm-Guan}
Let $X$ be a compact complex symplectic manifold such that
\[
H_{\mathrm{dR}}^2(X;\mathbb C)=H_{\db}^{2,0}(X)\oplus H_{\db}^{1,1}(X)\oplus H_{\db}^{0,2}(X)\,.
\]
Then, every small deformation of $X$ admits a complex symplectic structure.
\end{theorem}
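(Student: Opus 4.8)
The plan is to reinterpret the hypothesis as the degeneration of the Frölicher spectral sequence of $X$ in total degree $2$, to show that this property is open along the family, and to extract from it both the $d$-closedness of every holomorphic $2$-form and the possibility of deforming $\omega_{\bC}$ holomorphically. The only genuinely substantive point is this reformulation; the rest is soft.

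\emph{Step 1: reformulation.} Recall the Frölicher inequality $b_k(M)\le\sum_{p+q=k}h^{p,q}_{\db}(X)$, with equality for a fixed $k$ precisely when $E_1^{p,q}=E_\infty^{p,q}$ for all $p+q=k$, i.e.\ when the Frölicher spectral sequence of $X$ degenerates in total degree $k$. Since the hypothesis states that $H^2_{\mathrm{dR}}(X;\bC)$ is the internal direct sum of the $H^{p,q}_{\db}(X)$ with $p+q=2$, it gives $b_2(M)=\sum_{p+q=2}h^{p,q}_{\db}(X)$, so the Frölicher spectral sequence of $X=X_0$ degenerates in degree $2$.

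\emph{Step 2: openness along the family.} The number $b_2(M)$ is a topological invariant, hence constant in $t$, whereas $t\mapsto\sum_{p+q=2}h^{p,q}_{\db}(X_t)$ is upper semicontinuous (Kodaira--Spencer) and always $\ge b_2(M)$. Being integer-valued and equal to $b_2(M)$ at $t=0$, it equals $b_2(M)$ for all $t$ near $0$; for the same reason each $h^{p,q}_{\db}(X_t)$ is constant near $0$. Thus, for every $t$ close to $0$, the Frölicher spectral sequence of $X_t$ degenerates in degree $2$ and $h^{2,0}_{\db}(X_t)=h^{2,0}_{\db}(X_0)=\dim H^0(X_0,\Omega^2_{X_0})$. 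Two consequences follow. First, degeneration in degree $2$ forces $E_2^{2,0}(X_t)=E_1^{2,0}(X_t)$, hence $d_1=\del\colon E_1^{2,0}(X_t)=H^0(X_t,\Omega^2_{X_t})\to E_1^{3,0}(X_t)=H^0(X_t,\Omega^3_{X_t})$ vanishes; since there are no nonzero $\db_t$-exact $(3,0)$-forms, this means $\del_t\sigma=0$, hence $d\sigma=\del_t\sigma+\db_t\sigma=0$, for \emph{every} holomorphic $2$-form $\sigma$ on $X_t$. Second, constancy of $t\mapsto\dim H^0(X_t,\Omega^2_{X_t})$ and Grauert's base-change theorem make $\pi_*\Omega^2_{\mathcal X/\Delta}$ locally free near $0$, with formation commuting with base change; hence $\omega_{\bC}\in H^0(X_0,\Omega^2_{X_0})$ extends to a family $\{\sigma_t\}$, holomorphic in $t$, with $\sigma_0=\omega_{\bC}$ and each $\sigma_t$ a holomorphic $2$-form on $X_t$, in particular of pure bidegree $(2,0)$ for $J_t$.

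\emph{Step 3: conclusion.} By Step 2 each $\sigma_t$ is $d$-closed and of type $(2,0)$ with respect to $J_t$, so only non-degeneracy remains. The top-degree form $\sigma_0^n=\omega_{\bC}^n$ is a nowhere-vanishing section of $K_{X_0}$ over the compact manifold $M$, and $\sigma_t^n$ depends continuously (indeed holomorphically) on $t$; by compactness, $\sigma_t^n$ is nowhere vanishing for $t$ small, i.e.\ $\sigma_t$ is non-degenerate. Hence $(J_t,\sigma_t)$ is a complex symplectic structure on $M$ for every $t$ sufficiently close to $0$. The main obstacle is really Step 1 together with the observation that degeneration in degree $2$ alone already yields $\del\sigma=0$ for holomorphic $2$-forms; a more hands-on approach tracking $\db_t$-harmonic representatives of $\omega_{\bC}$ would instead require controlling the variation of the Laplacian $\Delta_{\db_t}$ along the family, which the spectral-sequence viewpoint sidesteps.
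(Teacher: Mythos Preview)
The paper does not prove this theorem; it merely cites it from Guan (reference \cite{guan}) and then quotes the generalization of Anthes--Cattaneo--Rollenske--Tomassini (Theorem~\ref{thm-ACRT}), one of whose hypotheses is degeneration of the Fr\"olicher spectral sequence at $E_1$. Your argument is correct and, in effect, supplies the bridge between these two quoted results: you observe that Guan's decomposition hypothesis is equivalent to the numerical identity $b_2=\sum_{p+q=2}h^{p,q}_{\db}$, hence to $E_1^{p,q}=E_\infty^{p,q}$ for $p+q=2$; that this degree-$2$ degeneration is open by upper-semicontinuity of the Hodge numbers against the constant $b_2$; and that it already forces $\del\sigma=0$ for every holomorphic $2$-form on each $X_t$, while the induced constancy of $h^{2,0}_{\db}$ allows you to extend $\omega_{\bC}$ across the family. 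In particular your proof uses strictly less than full $E_1$-degeneration, so it slightly sharpens the spectral-sequence hypothesis in Theorem~\ref{thm-ACRT}.

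By contrast, the paper's own stability criteria (Theorems~\ref{thm:stability-1} and~\ref{thm:stability-2}) avoid the Fr\"olicher viewpoint entirely: the first projects $\omega_{\bC}$ onto $d$-closed $(2,0)$-forms via the Bott--Chern Laplacian and Green operator, using constancy of $h^{2,0}_{BC}$ to make the projection vary smoothly; the second works with $\mathcal C^\infty$-fullness and tracks the $(2,0)$-component of $\omega_{\bC}$ directly. Your Grauert/base-change extension step plays the role that the harmonic projection $\Pi_t$ plays in the proof of Theorem~\ref{thm:stability-1}: both produce a smoothly varying $d$-closed $(2,0)$-form equal to $\omega_{\bC}$ at $t=0$, after which non-degeneracy is the same open-condition argument.
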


The result has been generalized \cite{anthes-cattaneo-rollenske-tomassini}, where stability holds under some hypotheses on the central fiber of a small deformation:
\begin{itemize}
    \item it satisfies the $\partial\overline\partial$-lemma;
    \item or, more in general, the Frölicher spectral sequence degenerates at the first page.
\end{itemize}
\begin{theorem}[Anthes-Cattaneo-Rollenske-Tomassini, \cite{anthes-cattaneo-rollenske-tomassini}]\label{thm-ACRT}
Every small deformation of a compact complex symplectic manifold satisfying one of the hypotheses above admits a complex symplectic structure.
\end{theorem}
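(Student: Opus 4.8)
The plan is to prove the statement under the weaker hypothesis that the Frölicher spectral sequence of the central fiber $X_0=(M,J_0)$ degenerates at $E_1$, since the $\partial\bar\partial$-lemma implies $E_1$-degeneration and hence subsumes both cases at once. Writing $2n=\dim_{\mathbb C}M$, a complex symplectic form on $X_t$ is a $d$-closed, non-degenerate $(2,0)_{J_t}$-form $\sigma_t$, and non-degeneracy means that $\sigma_t^{\,n}$ is nowhere vanishing. I would isolate three ingredients: (a) every holomorphic $2$-form on a compact complex manifold whose Frölicher sequence degenerates at $E_1$ is automatically $d$-closed; (b) the $E_1$-degeneration and the constancy of the Hodge numbers propagate from $X_0$ to the nearby fibers $X_t$; and (c) non-degeneracy is an open condition, so it suffices to produce a holomorphic $2$-form on $X_t$ that is $C^0$-close to $\omega_{\mathbb C}$.

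For (a), recall that the Frölicher sequence has $E_1^{p,q}=H^{p,q}_{\bar\partial}(X)$, with first differential induced by $\partial$. If $\sigma$ is a holomorphic $2$-form, i.e. $\bar\partial\sigma=0$, then $\partial\sigma$ is a $(3,0)$-form with $\bar\partial(\partial\sigma)=-\partial\bar\partial\sigma=0$, so $\partial\sigma$ represents the class $d_1[\sigma]\in E_1^{3,0}=H^{3,0}_{\bar\partial}(X)$. Since there are no $(3,-1)$-forms, $H^{3,0}_{\bar\partial}(X)$ has no coboundaries and coincides with the space of holomorphic $3$-forms; thus the class $d_1[\sigma]$ is the form $\partial\sigma$ itself. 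Degeneration at $E_1$ gives $d_1=0$, whence $\partial\sigma=0$ and therefore $d\sigma=\partial\sigma+\bar\partial\sigma=0$.

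For (b), combine the Frölicher inequality $\sum_{p+q=k}h^{p,q}_{\bar\partial}(X_t)\ge b_k(M)$ with the upper semicontinuity of each $h^{p,q}_{\bar\partial}$ under deformation and the constancy of the Betti numbers. Degeneration on $X_0$ means $\sum_{p+q=k}h^{p,q}_{\bar\partial}(X_0)=b_k(M)$, so for $t$ small
\[
b_k(M)\le \sum_{p+q=k}h^{p,q}_{\bar\partial}(X_t)\le \sum_{p+q=k}h^{p,q}_{\bar\partial}(X_0)=b_k(M).
\]
Equality forces each $h^{p,q}_{\bar\partial}(X_t)=h^{p,q}_{\bar\partial}(X_0)$ and also the $E_1$-degeneration of $X_t$. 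In particular $h^{2,0}_{\bar\partial}$ is constant near $0$, and by (a) every holomorphic $2$-form on $X_t$ is $d$-closed.

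Finally, constancy of $h^{2,0}_{\bar\partial}$ makes the direct image of the relative holomorphic $2$-forms locally free (Grauert, equivalently the Kodaira--Spencer theory of harmonic forms in the constant-dimension regime), so the spaces $H^0(X_t,\Omega^2_{X_t})$ are the fibers of a holomorphic vector bundle over the parameter space. Choosing a local holomorphic frame $\{\sigma_t^{(j)}\}$ and expanding $\omega_{\mathbb C}=\sum_j c_j\,\sigma_0^{(j)}$ in the central fiber, I set $\sigma_t\coloneqq\sum_j c_j\,\sigma_t^{(j)}$; this is a continuous family of holomorphic $2$-forms with $\sigma_0=\omega_{\mathbb C}$, and each $\sigma_t$ is $d$-closed by (a) and (b). Since $\sigma_t\to\omega_{\mathbb C}$ in $C^0$ while $\omega_{\mathbb C}^{\,n}$ is nowhere zero on the compact manifold $M$, the complex $2n$-form $\sigma_t^{\,n}$ is nowhere vanishing for $t$ small, so $\sigma_t$ is a complex symplectic form on $X_t$. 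I expect the main obstacle to be the analytic package underlying step (b) together with the resulting continuous choice of $\sigma_t$ with the prescribed limit: the whole argument hinges on transferring the purely cohomological hypothesis at $X_0$ into a genuine, uniformly non-degenerate closed $(2,0)$-form on each neighbouring fiber, and it is precisely here that the degeneration assumption on the central fiber is indispensable.
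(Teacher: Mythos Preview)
The paper does not supply its own proof of this theorem: Theorem~\ref{thm-ACRT} is stated as a result of Anthes--Cattaneo--Rollenske--Tomassini and is simply cited from \cite{anthes-cattaneo-rollenske-tomassini}, so there is no in-paper argument to compare against.

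That said, your proof is correct and is essentially the standard one. The three ingredients are exactly right: (a) on a compact complex manifold with $E_1$-degeneration every holomorphic $p$-form is $d$-closed (your argument via $d_1[\sigma]=[\partial\sigma]\in H^{3,0}_{\bar\partial}$ and the absence of $(3,-1)$-forms is clean); (b) the squeeze between the Fr\"olicher inequality and upper semicontinuity forces constancy of all $h^{p,q}_{\bar\partial}$ and hence $E_1$-degeneration on nearby fibers; and (c) constancy of $h^{2,0}_{\bar\partial}$ lets you invoke Kodaira--Spencer so that harmonic representatives vary smoothly, giving a $C^\infty$ (in particular $C^0$) family $\sigma_t$ with $\sigma_0=\omega_{\mathbb C}$, and non-degeneracy is open by compactness of $M$. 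Your remark that the $\partial\bar\partial$-lemma implies $E_1$-degeneration, so that it suffices to treat the latter, is also correct. The only place worth a word of extra care is the passage from ``locally free direct image'' to ``$\sigma_t\to\omega_{\mathbb C}$ in $C^0$ on $M$'': this genuinely requires the smooth variation of harmonic representatives (Kodaira--Spencer, \cite{kodaira-spencer}) rather than just the abstract vector-bundle statement, but you do cite this.
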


These theorems can not be used to explain the behavior observed in Examples \ref{ex:deformation-1-NLA} and \ref{example-iwasawa}. Indeed, By \cite{DGMS} and \cite{hasegawa2}, nilmanifolds do not satisfy the $\partial\overline\partial$-lemma unless they are tori.\\





    
Next, we investigate a different sufficient condition, involving the dimensions of the Bott-Chern cohomology groups, that ensures stability along special curves. For the sake of completeness, recall that the \emph{Bott-Chern cohomology} of a complex manifold $X$ is defined as
\[
H^{\bullet,\bullet}_{BC}(X)\coloneqq\frac{\text{Ker}\,\partial\cap\text{Ker}\,\overline\partial}{\text{Im}\,\partial\overline\partial}\,.	
\]
	
Also, recall from \cite[Section 3]{Schweitzer} that a {\em smooth} deformation of compact complex manifolds is a smooth submersion $\pi\colon\mathcal{X}\to B$ where $\mathcal{X}$ is a smooth manifold, $B\subset\bR^n$ is a ball and every fiber $X_t=\pi^{-1}(t)$ is a compact complex manifold. In particular, the $X_t$'s are all diffeomorphic.
	
\begin{theorem}\label{thm:stability-1}
Let $X$ be a compact complex symplectic manifold and let $\left\lbrace X_t\right\rbrace_{t\in (-\varepsilon,\varepsilon)}$ be a smooth family of deformations of $X = X_0$, where $\varepsilon> 0$. Suppose that the upper-semi-continuous function $t\mapsto h^{2,0}_{BC}(X_t)$ is constant. Then, the compact complex manifold $X_t$ admits a complex symplectic structure for any $t$ close enough to $0$.
\end{theorem}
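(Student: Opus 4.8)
The plan is to exploit the trivialization of the canonical bundle together with semicontinuity of Bott–Chern numbers. Recall that on the central fiber $X = X_0$ the complex symplectic form $\omega_{\mathbb C}$ is a $d$-closed $(2,0)$-form with $\omega_{\mathbb C}^n$ nowhere vanishing, hence $[\omega_{\mathbb C}]_{BC} \in H^{2,0}_{BC}(X)$ is a nonzero class. Note first that, for bidegree reasons, $H^{2,0}_{BC}(X_t)$ consists precisely of the $\partial_t$-closed and $\bar\partial_t$-closed $(2,0)$-forms — there is no $\partial_t\bar\partial_t$-exact $(2,0)$-form since $\Omega^{1,-1} = 0$ — and on such forms $\bar\partial_t$-closedness is equivalent to being of bidegree $(2,0)$ with $d$-image purely in $\Omega^{2,1}$, while $\partial_t$-closedness kills that remaining component; in other words a class in $H^{2,0}_{BC}(X_t)$ is represented by a genuinely $d$-closed holomorphic $2$-form. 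So $h^{2,0}_{BC}(X_t)$ is exactly the dimension of the space of $d$-closed $(2,0)$-forms on $X_t$, i.e. of closed holomorphic $2$-forms.

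The key step is the following standard deformation-of-cohomology argument. Since $t \mapsto h^{2,0}_{BC}(X_t)$ is assumed constant, the vector spaces $H^{2,0}_{BC}(X_t)$ fit together into a smooth (real-analytic in $t$, after the usual elliptic-theory setup) vector bundle over $(-\varepsilon,\varepsilon)$; this is the content of the upper-semicontinuity result for Bott–Chern numbers in \cite{Schweitzer} combined with constancy forcing local freeness. Concretely, using the Bott–Chern Laplacian $\Delta_{BC,t}$ (an elliptic, self-adjoint, fourth-order operator depending smoothly on $t$), constancy of the dimension of its kernel in bidegree $(2,0)$ gives a smoothly varying family of harmonic representatives, hence a smooth section $t \mapsto \gamma_t \in H^{2,0}_{BC}(X_t)$ with $\gamma_0 = [\omega_{\mathbb C}]_{BC}$. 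Pick a smooth $(2,0)$-form $\Omega_t$ on $M$ representing $\gamma_t$, with $\Omega_0 = \omega_{\mathbb C}$; by the discussion above each $\Omega_t$ is a $d$-closed $(2,0)$-form for $J_t$.

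It remains to check non-degeneracy. Non-degeneracy of a $(2,0)$-form $\Omega_t$ of top exterior power means $\Omega_t^n$ is a nowhere-vanishing section of the canonical bundle $\Lambda^{n,0}_{J_t}$, equivalently $\Omega_t^n \wedge \overline{\Omega_t^n}$ is a nowhere-vanishing $4n$-form. At $t = 0$ this holds because $\omega_{\mathbb C}$ is complex symplectic. Since $M$ is compact and $(t, x) \mapsto \Omega_t^n \wedge \overline{\Omega_t^n}$ is continuous and at $t = 0$ is a nowhere-zero volume form, it stays nowhere zero for $|t|$ small — the usual compactness-plus-openness argument: $\min_{x \in M} |(\Omega_t^n \wedge \overline{\Omega_t^n})(x)|$ (measured against any fixed auxiliary volume form) is continuous in $t$ and positive at $0$. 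Hence $\Omega_t$ is non-degenerate for small $t$, and $(J_t, \Omega_t)$ is a complex symplectic structure on $X_t$.

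The main obstacle is the first half of the second paragraph: producing a \emph{smoothly} (or even just continuously) varying family of Bott–Chern classes with prescribed value at $t=0$, which requires invoking the elliptic-operator machinery for the Bott–Chern Laplacian and the fact that constancy of $\dim \ker \Delta_{BC,t}$ upgrades the semicontinuous setup to a smooth vector bundle (Kodaira–Spencer-type stability for the $\Delta_{BC}$-harmonic spaces, as in \cite{Schweitzer}). Everything else — the bidegree identification of $H^{2,0}_{BC}$, and the persistence of non-degeneracy by compactness — is routine. One should also record the trivial but necessary remark that the relevant Bott–Chern number in bidegree $(2,0)$ is at least $1$ on $X_0$, so constancy indeed carries the symplectic class and not merely the zero class: the harmonic representative $\gamma_t$ can be chosen with $\gamma_0 = [\omega_{\mathbb C}]_{BC} \neq 0$, hence $\Omega_t \neq 0$ and in particular $\Omega_t^n \neq 0$ at $t=0$, which is exactly what the last paragraph needs.
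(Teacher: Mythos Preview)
Your proof is correct and follows essentially the same strategy as the paper. Both arguments hinge on the observation that $H^{2,0}_{BC}(X_t)$ coincides with the space of $d$-closed $(2,0)$-forms (no $\partial\bar\partial$-exact forms in bidegree $(2,0)$), then use constancy of $h^{2,0}_{BC}$ together with the Kodaira--Spencer machinery to produce a smoothly varying family of $d$-closed $(2,0)$-forms starting at $\omega_{\mathbb C}$, and conclude non-degeneracy for small $t$ by compactness. The only cosmetic difference is that the paper writes down the projection operator $\Pi_t=(\mathcal H^t_{BC}+\partial_t\bar\partial_t(\partial_t\bar\partial_t)^{*_t}G_t)\circ\pi^{2,0}_t$ explicitly and applies it to the fixed form $\omega_{\mathbb C}$, whereas you phrase the same step as choosing a smooth section of the bundle $\bigcup_t H^{2,0}_{BC}(X_t)$; these are equivalent formulations of the same construction.
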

\begin{proof}
Let $\left\lbrace \omega_t\right\rbrace$ be a family of Hermitian metrics on $X_t$. For each $t\in (-\varepsilon,\varepsilon)$ consider the associated Bott-Chern Laplacian $\Delta_{BC}^t$ and the corresponding Green operator $G_t$. Using Hodge theory for the Bott-Chern cohomology, we can consider the projection $\mathcal{H}^t_{BC}: A^\bullet(X_t,\mathbb{C})\to \text{Ker}\,\Delta_{BC}^t$ from complex valued differential forms onto their Bott-Chern harmonic part. Let $\pi^{2,0}_t:A^\bullet(X_t,\mathbb{C})\to A^{2,0}(X_t)$ be the projection from the space of complex valued differential forms onto the space of $(2,0)$-forms on $X_t$. For any $t\in (-\varepsilon,\varepsilon)$, define
\[
\Pi_t\coloneqq\left(\mathcal{H}^t_{BC}+\partial_t\overline\partial_t(\partial_t\overline\partial_t)^{*_t}{G_t}\right)\circ \pi^{2,0}_t:A^\bullet(X_t,\mathbb{C})\to \text{Ker}\,\partial_t\cap\text{Ker}\overline\partial_t\cap A^{2,0}(X_t)\,.
\]
Note that $\pi_t$ is the projection of the space of complex forms onto the space of $\partial_t$- and $\overline{\partial}_t$-closed complex $(2,0)$-forms, i.e. the space of $d$-closed complex $(2,0)$-forms.
		
Since by assumption the upper semi continuous map $t\mapsto h^{2,0}_{BC}(X_t)$ is constant, by \cite[Theorem 5]{kodaira-spencer} the family $\left\lbrace\Pi_t\right\rbrace_{t\in (-\varepsilon,\varepsilon)}$ is smooth in $t$. Let $(J_0,\omega_0^\mathbb{C})$ be a complex symplectic structure on $X_0$ and set
\[
\omega_t^\mathbb{C}\coloneqq\Pi_t(\omega_0^\mathbb{C})\,.
\]
		
Then, $\left\lbrace\omega_t^\mathbb{C}\right\rbrace_{t\in (-\varepsilon,\varepsilon)}$ is a smooth family in $t$ of $d$-closed $(2,0)$-forms on $X_t$ and for $t$ close to $0$, $\omega_t^\mathbb{C}$ is non-degenerate. Hence, $(J_t,\omega_t^\mathbb{C})$ is a complex symplectic structure on $X_t$.
\end{proof}

The previous result can be used to explain why the existence of a complex symplectic structure is not preserved along the deformation given in Example~\ref{ex:deformation-1-NLA}.


\begin{example}\label{ex:deformation1-BC-non-preserved}
Let $(\Gamma\backslash G,J_0)$ be the $8$-dimensional nilmanifold with invariant complex structure given in Example~\ref{ex:deformation-1-NLA} and defined by the complex structure equations~\eqref{ex:initial-CSnilm-1}. The Bott-Chern cohomology of $(\Gamma\backslash G,J_0)$ can be computed at the Lie algebra level (see~\cite{angella} together with~\cite{rollenske-tomassini-wang}, for instance), so one can use~\eqref{ex:deformation-1-NLA} to get
\[
H_{BC}^{2,0}(\Gamma\backslash G,J_0)=\langle [\varphi^{12}],\,[\varphi^{13}],\,[\varphi^{14}],\,[\varphi^{24}] \rangle\,.
\]
If we now compute the same cohomology group for $(\Gamma\backslash G, J_{\mathbf t})$, with $\mathbf t\neq 0$, using the corresponding structure equations~\eqref{ex:deformed-CSnilm-1}, we obtain
\[
H_{BC}^{2,0}(\Gamma\backslash G,J_{\mathbf t})=\langle [\varphi^{12}],\,[\varphi^{13}],\,[\varphi^{14}]\rangle\,.
\]
In particular, if we assume $\mathbf t=t\in\mathbb R$, one can clearly see that the function $t\mapsto h^{2,0}_{BC}(X_t)$ is not constant along the deformation.
\end{example}

On the other hand, it is possible to provide another deformation of the manifold $(\Gamma\backslash G,J_0)$ in Example~\ref{ex:deformation-1-NLA} where the hypothesis in Theorem~\ref{thm:stability-1} holds, thus preserving the existence of complex symplectic structures.

\begin{example}\label{ex:deformation1-BC-preserved}
Let $(\Gamma\backslash G,J_0)$ be as in Example \ref{ex:deformation-1-NLA}, with structure equations~\eqref{ex:initial-CSnilm-1}. We consider the Dolbeault comohology class $[\varphi^{\bar1}]\in H^{0,1}_{\bar{\partial}}(\Gamma\backslash G,J_0)$ and consider a differentiable family of compact complex manifolds $\big\{X_t\coloneqq(\Gamma\backslash G,J_t)\big\}_{t\in(-1,1)}$ determined by the $(1,0)$-forms
\[
\varphi_{t}^1\coloneqq\varphi^1+t\,\omega^{\bar1}, \quad 
\varphi_{t}^2\coloneqq\varphi^2,\quad
\varphi_{t}^3\coloneqq\varphi^3,\quad 
\varphi_{t}^4\coloneqq\varphi^4\,.
\]
Then, the complex structure equations of $(\Gamma\backslash G,J_t)$ are
\begin{equation}\label{ex:deformed-CSnilm-1-v2}
d\varphi_{t}^1=d\varphi_{t}^2=0, \quad 
d\varphi_{t}^3=\frac{1}{1-t^2}\,\varphi_{t}^{1\bar1}, \quad
d\varphi_{t}^4=\frac{1}{1-t^2}\,\varphi_{t}^{12}
+\frac{t}{1-t^2}\,\varphi_{\mathbf{t}}^{2\bar1},
\end{equation}
and it is easy to see from here that
$$H_{BC}^{2,0}(\Gamma\backslash G,J_t)=\langle [\omega^{12}],\,[\omega^{13}],\,[\omega^{14}+t\,\omega^{23}],\,[\omega^{24}]\rangle.$$
Therefore, the function $t\mapsto h^{2,0}_{BC}(X_t)$ is constant and there is a complex symplectic structure on every $X_t$, for $t\in(-1,1)$. Indeed, one can check that 
\[
\Omega_{\mathbb C}=\alpha\,\varphi_{t}^{12}
	+\beta\,\varphi_{t}^{13}+\gamma\,(\varphi_{t}^{14}
	+t\,\varphi_t^{23})+\theta\,\varphi_t^{24}\,,
\]
with $\alpha,\beta,\gamma,\theta\in\mathbb C$ such that $t\,\gamma^2-\beta\,\theta\neq 0$, is a closed non-degenerate $(2,0)$-form on $X_t$ for every $t\in(-1,1)$.
\end{example}

Next, we describe another sufficient condition ensuring stability of complex symplectic structures.

\medskip

Let $(M,J)$ be a $2n$-dimensional manifold endowed with an almost complex structure. In~\cite{angella-tomassini-2} (see also~\cite{li-zhang}), the authors introduce the following subgroups of the complex de Rham cohomology groups of $M$:
\[
H^{p,q}_J(M)=\left\{ \mathbf a\in H_{\text{dR}}^{p+q}(M;\mathbb C) \mid \exists \ \alpha\in\Omega_J^{p,q}(M) \mid \mathbf a=[\alpha] \right\}\,.    
\]
The almost complex structure $J$ is said to be 
\emph{complex-\,$\mathcal C^{\infty}$-pure at the $k$-th stage} if
\begin{equation}\label{eq:pure}
\bigcap_{p+q=k}\!\!\!H^{p,q}_J(M)=\big\{[0]\big\}\,,
\end{equation}
and it is called \emph{complex-\,$\mathcal C^{\infty}$-full at the $k$-th stage} if
\begin{equation}\label{eq:full}
H_{\text{dR}}^{k}(M;\mathbb C)=\sum_{p+q=k}H^{p,q}_J(M)\,.    
\end{equation}
When \eqref{eq:pure} and \eqref{eq:full} hold simultaneously, namely when 
\[
H_{\text{dR}}^{k}(M;\mathbb C)=\bigoplus_{p+q=k}H^{p,q}_J(M)\,,
\]
then $J$ is called \emph{complex-\,$\mathcal C^{\infty}$-pure-and-full at the $k$-th stage}. 
\medskip
Let $X=(M,J)$ be a compact complex manifold which is complex-\,$\mathcal C^{\infty}$-full at the second stage and fix a closed form $\omega\in\Omega^2(M,\mathbb C)$. Then
\[
[\omega]\in H_{\text{dR}}^2(M;\mathbb C)=H^{2,0}_J(M)+H^{1,1}_J(M)+H^{0,2}_J(M)\,.
\]
Hence, there are cohomology classes $\mathbf a\in H^{2,0}_J(M)$, $\mathbf b\in H^{1,1}_J(M)$,
and $\mathbf c\in H^{0,2}_J(M)$ such that $[\omega]=\mathbf a+\mathbf b+\mathbf c$. Thus, for $p+q=2$, one can find representatives $\alpha^{p,q}\in\Omega_J^{p,q}(M)$ for each of these classes, such that
\[
\omega=\alpha^{2,0}+\alpha^{1,1}+\alpha^{0,2}+d\gamma\,,
\]
where $\gamma\in\Omega^1(M,\mathbb C)$. Following a similar idea as in \cite{latorre-ugarte-proc}, one can then define the space
\[
\Delta(X,\omega)=\left\{ \gamma\in\Omega^1(M,\mathbb C)
	\ \ \Big\vert \ \begin{array}{c}
		d\gamma=\omega-(\alpha^{2,0}+\alpha^{1,1}+\alpha^{0,2}) \\
		\text{for some closed }\alpha^{p,q}\in\Omega_J^{p,q}(M) 
	\end{array}\right\}\,,
\]
which is non-empty, since $X=(M,J)$ is complex-\,$\mathcal C^{\infty}$-full at the second stage. Let us also observe that $\omega$ is fixed, but the representatives $\alpha^{p,q}$ depend on the bigrading induced by $J$. Also, the decomposition $d=\partial+\bar\partial$ of the exterior derivative and the projection 
$\pi^{p,q}\colon\Omega^*(M,\mathbb C)\to\Omega^{p,q}_J(M)$ depend on the bigrading.

\begin{theorem}\label{thm:stability-2}
Let $X=(M,J_0)$ be a compact complex manifold endowed with a complex symplectic form $\omega_{\mathbb C}$. Let $\{X_t\}_{t\in(-\varepsilon,\varepsilon)}$ be a differentiable family of deformations of $X_0=X$, where $\varepsilon>0$. If for every $t\neq 0$ the manifold $X_t$ is complex-\,$\mathcal C^{\infty}$-full at the second stage and there is $\gamma_t\in\Delta(X_t,\omega_{\mathbb C})$ satisfying 
$\partial_t\bar{\partial}_t\big(\pi_t^{1,0}(\gamma_t)\big)=0$,
then $X_t$ is complex symplectic for any sufficiently small $t$.
\end{theorem}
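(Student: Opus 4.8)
The plan is to produce, for each small $t$, an explicit $d$-closed $(2,0)$-form on $X_t$ and then conclude by the openness of the non-degeneracy condition. The natural candidate is the $(2,0)$-component $\pi_t^{2,0}(\omega_{\mathbb C})$ of the fixed closed form $\omega_{\mathbb C}$ relative to the bigrading induced by $J_t$; this component converges to $\omega_{\mathbb C}$ as $t\to 0$, since $J_t\to J_0$ and $\omega_{\mathbb C}$ has type $(2,0)$ for $J_0$. So the whole problem reduces to checking that $\pi_t^{2,0}(\omega_{\mathbb C})$ is $d$-closed for $t\neq 0$.

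First I would use that $X_t$ is complex-$\mathcal C^\infty$-full at the second stage together with the existence of a $\gamma_t\in\Delta(X_t,\omega_{\mathbb C})$ satisfying $\partial_t\bar{\partial}_t\big(\pi_t^{1,0}(\gamma_t)\big)=0$ to write
\[
\omega_{\mathbb C}=\alpha_t^{2,0}+\alpha_t^{1,1}+\alpha_t^{0,2}+d\gamma_t\,,
\]
with $d$-closed forms $\alpha_t^{p,q}\in\Omega_{J_t}^{p,q}(M)$. Splitting $\gamma_t=\pi_t^{1,0}(\gamma_t)+\pi_t^{0,1}(\gamma_t)$ and noting that the only $(2,0)$-contribution of $d\gamma_t$ relative to $J_t$ is $\partial_t\big(\pi_t^{1,0}(\gamma_t)\big)$, comparison of the $(2,0)$-parts of the above identity yields
\[
\pi_t^{2,0}(\omega_{\mathbb C})=\alpha_t^{2,0}+\partial_t\big(\pi_t^{1,0}(\gamma_t)\big)\,.
\]

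The key computation is then the $d$-closedness of this form. Here $d\alpha_t^{2,0}=0$ by construction, and, using integrability of $J_t$ (so that $\partial_t^2=0$ and $\partial_t\bar{\partial}_t=-\bar{\partial}_t\partial_t$),
\[
d\big(\partial_t(\pi_t^{1,0}(\gamma_t))\big)=\partial_t^2\big(\pi_t^{1,0}(\gamma_t)\big)+\bar{\partial}_t\partial_t\big(\pi_t^{1,0}(\gamma_t)\big)=-\partial_t\bar{\partial}_t\big(\pi_t^{1,0}(\gamma_t)\big)=0
\]
by hypothesis. Hence $\pi_t^{2,0}(\omega_{\mathbb C})$ is a $d$-closed $(2,0)$-form on $X_t$ for every $t\neq 0$, and it coincides with $\omega_{\mathbb C}$ at $t=0$.

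To conclude, since $\{J_t\}$ depends differentiably on $t$, so do the projectors $\pi_t^{2,0}$, whence $t\mapsto\pi_t^{2,0}(\omega_{\mathbb C})$ is continuous in the $C^0$-topology and equals the non-degenerate form $\omega_{\mathbb C}$ at $t=0$; as non-degeneracy of a $2$-form on a compact manifold is an open condition, $\pi_t^{2,0}(\omega_{\mathbb C})$ stays non-degenerate for $|t|$ small, and so $\big(J_t,\pi_t^{2,0}(\omega_{\mathbb C})\big)$ is a complex symplectic structure on $X_t$ for all sufficiently small $t$. I do not expect a genuine obstacle: the only delicate point is the bidegree bookkeeping, i.e.\ checking that $\Delta(X_t,\omega_{\mathbb C})$ indeed supplies \emph{closed} representatives $\alpha_t^{p,q}$ and that the extra hypothesis $\partial_t\bar{\partial}_t(\pi_t^{1,0}(\gamma_t))=0$ is precisely what annihilates the would-be obstruction $\bar{\partial}_t\partial_t(\pi_t^{1,0}(\gamma_t))$ to $d$-closedness. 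Unlike in Theorem~\ref{thm:stability-1}, no analytic input (upper semicontinuity, Green operators, the Kodaira--Spencer stability theorem) enters here.
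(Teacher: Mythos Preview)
Your proof is correct and follows essentially the same approach as the paper: both take the $(2,0)$-component $\pi_t^{2,0}(\omega_{\mathbb C})$ as the candidate, use the decomposition coming from complex-$\mathcal C^\infty$-fullness to identify it as $\alpha_t^{2,0}+\partial_t(\pi_t^{1,0}(\gamma_t))$, observe that the hypothesis $\partial_t\bar\partial_t(\pi_t^{1,0}(\gamma_t))=0$ kills the only possible obstruction to $d$-closedness, and conclude non-degeneracy by continuity in $t$. Your write-up is in fact slightly more explicit than the paper's in spelling out the integrability identities $\partial_t^2=0$ and $\partial_t\bar\partial_t=-\bar\partial_t\partial_t$ at the key step.
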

\begin{proof}
For each $t\in(-\varepsilon,\varepsilon)$, we write $X_t=(M,J_t)$.
For the given $\omega_{\mathbb C}\in\Omega^2(M,\mathbb C)$ one can consider how its bigrading varies along the deformation; in general, one has
\begin{equation}\label{expression1}
\omega_{\mathbb C}=( \omega_{\mathbb C} )_t^{2,0}
	+( \omega_{\mathbb C} )_t^{1,1}
	+( \omega_{\mathbb C} )_t^{0,2}
	\in\Omega^{2,0}_{J_t}(M)\oplus \Omega^{1,1}_{J_t}(M)\oplus\Omega^{0,2}_{J_t}(M),
\end{equation}
where $( \omega_{\mathbb C} )_t^{p,q}=\pi^{p,q}_t(\omega_{\mathbb C})\in \Omega^{p,q}_{J_t}(M)$,
for each $p+q=2$. Moreover, $\omega_{\mathbb C}$ is a complex symplectic form on $X_0$, so
$\omega_{\mathbb C}=( \omega_{\mathbb C} )_0^{2,0}$. 
Note that the above decomposition is unique and the
family $\big\{ ( \omega_{\mathbb C} )_t^{2,0} \big\}_t$ is smooth in $t$; however, the forms $( \omega_{\mathbb C} )_t^{2,0}$ may fail to be closed, for $t\neq 0$.

Now, we have that $X_t$ is complex-\,$\mathcal C^{\infty}$-full at the second stage for every $t\neq 0$. Consequently, 
for $p+q=2$, one can find $\alpha_t^{p,q}\in\Omega_{J_t}^{p,q}(M)$ such that $d\alpha_t^{p,q}=0$ and
\begin{equation}\label{expression2}
\omega_{\mathbb C}=\alpha_t^{2,0}+\alpha_t^{1,1}+\alpha_t^{0,2}+d\gamma_t,
\end{equation}
for some $\gamma_t\in\Delta(X_t,\omega_{\mathbb C})$. Indeed, by hypothesis
the element $\gamma_t$ can be chosen to satisfy the condition 
$\partial_t\bar{\partial}_t\big(\pi_t^{1,0}(\gamma_t)\big)=0$. 
If we now compare~\eqref{expression1} and~\eqref{expression2}, one necessarily has that
\[
( \omega_{\mathbb C} )_t^{2,0}= \alpha_t^{2,0}+\partial_t\big(\pi_t^{1,0}(\gamma_t) \big)\,,
\]
and from this we get $d( \omega_{\mathbb C} )_t^{2,0}=0$. Furthermore, as 
the family $\big\{( \omega_{\mathbb C} )_t^{2,0}\big\}_t$ is smooth in 
$t\in(-\varepsilon,\varepsilon)$ and the corresponding element for $t=0$ is
non-degenerate, we have that $( \omega_{\mathbb C} )_t^{2,0}$ is also non-degenerate
for sufficiently small values of $t$. This gives the result.
\end{proof}

\begin{example}
Consider an $8$-dimensional nilmanifold $\Gamma\backslash G$ with invariant complex structure $J_0$ determined by the complex structure equations
\begin{equation*}
 d\varphi^1=d\varphi^2=0, \quad d\varphi^3=\varphi^{12}, \quad d\varphi^4=\varphi^{13}.
\end{equation*}
This manifold admits the following family of complex symplectic structures:
\begin{equation*}
\omega_{\mathbb C}=A\,\varphi^{12}+B\,\varphi^{13}+C\,\varphi^{14}+D\,\varphi^{23}, \text{ with }A,B,C,D\in\mathbb C, \ CD\neq 0.
\end{equation*}

The $(0,1)$-form $\varphi^{\bar1}$ defines a Dolbeault cohomology class on $(\Gamma\backslash G,J_0)$. One can then define a complex structure $J_\mathbf{t}$, for $\mathbf{t}\in\mathbb C$ with $|\mathbf{t}|<1$, on $\Gamma\backslash G$ determined by the $(1,0)$-forms
\begin{equation*}
\varphi_{\mathbf{t}}^1\coloneqq\varphi^1, \quad 
\varphi_{\mathbf{t}}^2\coloneqq\varphi^2,\quad
\varphi_{\mathbf{t}}^3\coloneqq\varphi^3+\mathbf{t}\,\varphi^{\bar1},\quad 
\varphi_{\mathbf{t}}^4\coloneqq\varphi^4\,.
\end{equation*}
The complex structure equations of $(\Gamma\backslash G,J_{\mathbf{t}})$ are
\begin{equation*}
d\varphi_{\mathbf{t}}^1=d\varphi_{\mathbf{t}}^2=0, \quad 
d\varphi_{\mathbf{t}}^3=\varphi_{\mathbf{t}}^{12}, \quad
d\varphi_{\mathbf{t}}^4=\varphi_{\mathbf{t}}^{13}
-{\mathbf{t}}\,\varphi_{\mathbf{t}}^{1\bar1}.
\end{equation*}
From here, one computes the second de Rham cohomology group for $(,J_{\mathbf{t}})$,
\[
H_{\text{dR}}^2(\Gamma\backslash G)=\langle 
[\varphi_{\mathbf{t}}^{14}],\,
[\varphi_{\mathbf{t}}^{23}],\,
[\varphi_{\mathbf{t}}^{1\bar 1}],\,
[\varphi_{\mathbf{t}}^{1\bar 2}], \,
[\varphi_{\mathbf{t}}^{2\bar 1}],\,
[\varphi_{\mathbf{t}}^{2\bar 2}],\,
[\varphi_{\mathbf{t}}^{\bar1\bar 4}],\,
[\varphi_{\mathbf{t}}^{\bar2\bar 3}]
\rangle\,.
\]
From the description above, one can conclude that the complex manifolds $(\Gamma\backslash G,J_{\mathbf{t}})$ are complex-$\mathcal C^\infty$-full at the second stage for every $\mathbf t\in\mathbb C$. Moreover, they satisfy $\partial_t\bar{\partial}_t\big(\Omega^{1,0}_{J_{\mathbf t}}(\Gamma\backslash G)\big)=0$. In particular, the conditions of Theorem~\ref{thm:stability-2} are satisfied and a complex symplectic structure exists for every $\mathbf t\in (-1,1)$.
\end{example}

This example also satisfies the conditions of Theorem \ref{thm:stability-1}, as one can compute
\[
H_{\text{BC}}^{2,0}=\langle 
[\varphi_{\mathbf{t}}^{12}],\,
[\varphi_{\mathbf{t}}^{13}],\,
[\varphi_{\mathbf{t}}^{14}],\,
[\varphi_{\mathbf{t}}^{23}] \rangle\,,
\]
for every $\mathbf t$. The following result explains why this second condition also holds.

\begin{proposition}\label{prop:BC-const}
Let $\{X_t\}_{t\in(-\varepsilon,\varepsilon)}$, with $\varepsilon>0$, be a differentiable family of deformations of a compact complex manifold $X_0$. Suppose that for every $t\neq 0$ the manifold $X_t$ satisfies the following two properties:
\begin{enumerate}
\item[i)] $X_t$ is complex-\,$\mathcal C^\infty$-full at the second stage,
\item[ii)] every $(1,0)$-form on $X_t$ is $\partial_t\bar{\partial}_t$-closed.
\end{enumerate}
Then, the function $t\mapsto h_{BC}^{2,0}(X_t)$ is constant, for $t\in(-\varepsilon',\varepsilon')$, $0<\varepsilon'<\varepsilon$.
\end{proposition}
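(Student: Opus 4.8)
The plan is to show that the Bott--Chern number $h^{2,0}_{BC}(X_t)$ is both upper-semicontinuous (which is automatic, by general theory, see e.g. \cite{Schweitzer}) and lower-semicontinuous near $t=0$, and that the common value is then constant. Upper-semicontinuity gives $h^{2,0}_{BC}(X_t)\leq h^{2,0}_{BC}(X_0)$ for small $t$, so the real content is the reverse inequality for $t\neq 0$; together with upper-semicontinuity at all points this will force constancy on a possibly smaller interval $(-\varepsilon',\varepsilon')$. The strategy for the lower bound is to exhibit, for each $t\neq 0$, an injection (or at least a dimension-counting argument) from $H^{2,0}_{BC}(X_t)$ into a space whose dimension is controlled from below independently of $t$, using hypotheses (i) and (ii).

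The key observation is that hypothesis (ii) forces $\partial_t\bar\partial_t\gamma=0$ for every $(1,0)$-form $\gamma$ on $X_t$, and hence $\operatorname{Im}(\partial_t\bar\partial_t)\cap\Omega^{2,0}_{J_t}(M)$ is as small as possible: since on $(2,0)$-forms the map $\partial_t\bar\partial_t$ can only produce things from $\Omega^{1,1}_{J_t}$ and $\Omega^{0,2}_{J_t}$ via pieces that land back in bidegree $(2,0)$, the condition (ii) says precisely that $\partial_t\bar\partial_t$ kills all of $\Omega^{1,0}_{J_t}$, so the relevant quotient defining $H^{2,0}_{BC}(X_t)$ simplifies. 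Concretely, a $(2,0)$-form $\alpha$ represents a Bott--Chern class iff $\partial_t\alpha=0$ and $\bar\partial_t\alpha=0$, i.e.\ iff $d\alpha=0$; and the coboundaries $\partial_t\bar\partial_t(\Omega^{1,1}_{J_t})$ meeting $\Omega^{2,0}$ are constrained. The first step is therefore to identify $H^{2,0}_{BC}(X_t)$ with the space of $d$-closed $(2,0)$-forms modulo an explicit, small coboundary space. The second step is to use complex-$\mathcal C^\infty$-fullness at the second stage to relate $d$-closed $(2,0)$-forms to genuine de Rham classes: fullness says $H^2_{dR}(M;\mathbb C)=H^{2,0}_{J_t}(M)+H^{1,1}_{J_t}(M)+H^{0,2}_{J_t}(M)$, and the conjugation symmetry pairs $H^{2,0}$ with $H^{0,2}$, so $\dim H^{2,0}_{J_t}(M)$ is bounded below in terms of $b_2(M)$ and $\dim H^{1,1}_{J_t}(M)$; combined with upper-semicontinuity of $h^{1,1}_{BC}$ or a parallel argument, one extracts the needed lower bound on $h^{2,0}_{BC}(X_t)$. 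The third step is to assemble these bounds: $h^{2,0}_{BC}(X_t)\leq h^{2,0}_{BC}(X_0)$ by upper-semicontinuity, and $h^{2,0}_{BC}(X_t)\geq h^{2,0}_{BC}(X_0)$ from the fullness-plus-(ii) argument run in reverse at $t=0$ (note that $X_0$, being complex symplectic, certainly carries a $d$-closed $(2,0)$-form, and one checks $X_0$ inherits the relevant properties, or one argues by a limiting/semicontinuity sandwich).

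The step I expect to be the main obstacle is making precise the equality between $H^{2,0}_{BC}(X_t)$ and the space of $d$-closed $(2,0)$-forms modulo the degenerate coboundaries, and in particular verifying that hypothesis (ii) really does trivialize the coboundary contribution in bidegree $(2,0)$: one must check that $\partial_t\bar\partial_t$ applied to an arbitrary form has no $(2,0)$-component that survives, which requires bookkeeping of the bidegrees of $\partial_t$ and $\bar\partial_t$ on a possibly non-integrable-looking decomposition --- though here $J_t$ is integrable, so $\partial_t$ has bidegree $(1,0)$ and $\bar\partial_t$ has bidegree $(0,1)$, and $\partial_t\bar\partial_t$ sends $\Omega^{p,q}$ to $\Omega^{p+1,q+1}$, so the only way to reach bidegree $(2,0)$ is from $\Omega^{1,-1}=0$. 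Hence $\partial_t\bar\partial_t$ never hits $\Omega^{2,0}_{J_t}$ at all, and $H^{2,0}_{BC}(X_t)$ is simply the space of $d$-closed $(2,0)$-forms --- no quotient. Then the argument reduces cleanly to: $h^{2,0}_{BC}(X_t)=\dim\{d\text{-closed }(2,0)\text{-forms}\}$, fullness bounds this below via $H^{2,0}_{J_t}(M)$, upper-semicontinuity bounds it above, and one pins the constant down by evaluating at $t$ near $0$ where all quantities agree. The genuine subtlety, then, is purely the semicontinuity sandwich: ensuring the lower bound coming from fullness matches $h^{2,0}_{BC}(X_0)$ exactly rather than some smaller topological quantity, which is where one invokes that $X_0$ itself satisfies (i)–(ii) in the limit or uses that $h^{2,0}_{BC}(X_0)$ is an upper bound attained, closing the loop.
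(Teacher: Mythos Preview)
Your proposal has a genuine gap, and it stems from misreading the role of both hypotheses.

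You correctly observe that $H^{2,0}_{BC}(X_t)$ has no coboundaries (since $\partial_t\bar\partial_t$ raises bidegree by $(1,1)$ and $\Omega^{1,-1}=0$), so $H^{2,0}_{BC}(X_t)$ is literally the space of $d$-closed $(2,0)_{J_t}$-forms. But this is true with or without hypothesis~(ii); so in your outline, (ii) is never actually used. Your remaining plan is to bound $h^{2,0}_{BC}(X_t)$ below via $\dim H^{2,0}_{J_t}(M)$ and then invoke fullness to control that in terms of $b_2$ and $\dim H^{1,1}_{J_t}(M)$. This cannot close: fullness gives only $b_2\leq \sum_{p+q=2}\dim H^{p,q}_{J_t}(M)$, so you would need an \emph{upper} bound on $\dim H^{1,1}_{J_t}(M)$, which is not available, and even then there is no reason the resulting topological quantity equals $h^{2,0}_{BC}(X_0)$. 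You flag this yourself as ``the genuine subtlety'' but do not resolve it; invoking that $X_0$ satisfies (i) and (ii) in the limit is not justified, since these are assumed only for $t\neq 0$.

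The paper's argument is entirely different and uses (i) and (ii) in tandem for a direct construction. Given $\alpha\in\Omega^{2,0}_{J_0}(M)$ with $d\alpha=0$, one takes its $(2,0)_{J_t}$-component $\alpha^{2,0}_t=\pi^{2,0}_t(\alpha)$ and shows it is $d$-closed. If $[\alpha]\neq 0$ in de Rham, fullness (i) on $X_t$ produces closed pure-type forms $\eta^{p,q}_t$ and $\gamma\in\Omega^1(M,\bC)$ with $\alpha=\eta^{2,0}_t+\eta^{1,1}_t+\eta^{0,2}_t+d\gamma$, whence $\alpha^{2,0}_t=\eta^{2,0}_t+\partial_t\gamma^{1,0}_t$; then $d\alpha^{2,0}_t=\bar\partial_t\partial_t\gamma^{1,0}_t=0$ precisely by (ii). If $\alpha=d\beta$ is exact, the same conclusion follows directly from $\alpha^{2,0}_t=\partial_t\beta^{1,0}_t$. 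A continuity argument then shows the assignment $[\alpha]\mapsto[\alpha^{2,0}_t]$ is injective for small $t$, giving $h^{2,0}_{BC}(X_t)\geq h^{2,0}_{BC}(X_0)$, which together with upper-semicontinuity yields constancy. The point you are missing is that hypothesis~(ii) is exactly what makes the $(2,0)_{J_t}$-projection of a $d$-closed form remain $d$-closed; it has nothing to do with coboundaries in $H^{2,0}_{BC}$.
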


\begin{proof}
For each $t\in(-\varepsilon,\varepsilon)$, we write $X_t=(M,J_t)$. As (non-zero) $(2,0)$-forms on $X_t$ cannot be $\partial_t\bar{\partial}_t$-exact, one has
\[
H_{BC}^{2,0}(X_t)=\text{Ker}\big\{ d\colon\Omega^{2,0}_{J_t}(M)\to \Omega^3(M,\bC)\big\}\,.
\]
Moreover, recall that the function $t\mapsto h_{BC}^{2,0}(X_t)$ is upper-semi-continuous~\cite[Lemma 3.2]{Schweitzer}, which means that $h_{BC}^{2,0}(X_0)\geq h_{BC}^{2,0}(X_t)$, for every $t\neq 0$. 

Take $\mathbf 0\neq \mathbf a\in H_{BC}^{2,0}(X_0)$. Then, there exists $\alpha\in\Omega^{2,0}_{J_0}(M)\subseteq\Omega^2(M,\bC)$ such that $\mathbf a=[\alpha]$. Even though the $d$-closed form $\alpha$ has type $(2,0)$ with respect to the bigrading induced by $J_0$, in general one has
\begin{equation}\label{alpha-decomp}
\alpha=\alpha^{2,0}_t+\alpha^{1,1}_t+\alpha^{0,2}_t
	\in\Omega^{2,0}_{J_t}(M)\oplus\Omega^{1,1}_{J_t}(M)\oplus\Omega^{0,2}_{J_t}(M),
\end{equation}
where $\alpha^{p,q}_t=\pi_t^{p,q}(\alpha)\in\Omega^{p,q}_{J_t}(M)$, for each $p+q=2$. 

Since the natural map $H_{BC}^{2,0}(X_0)\to H^2_{\text{dR}}(M;\mathbb C)$ is in general not injective, we consider two cases.

Let us first suppose that there exists $\beta\in\Omega^1(M,\bC)$ such that $\alpha=d\beta$. Considering the bidegree of $\beta$ along the deformation, we have
$$\beta=\beta^{1,0}_t+\beta^{0,1}_t\in\Omega^{1,0}_{J_t}(M)\oplus\Omega^{0,1}_{J_t}(M).$$
Consequently, the equality $\alpha=d\beta=(\partial_t+\bar{\partial}_t)\beta$, together with~\eqref{alpha-decomp}, gives $\alpha^{2,0}_t=\partial_t\beta^{1,0}_t$. Therefore, 
\[
d\alpha^{2,0}_t=\bar{\partial}_t\partial_t\beta^{1,0}_t=0\,,
\]
due to assumption ii). Therefore, $[\alpha^{2,0}_t]\in H_{BC}^{2,0}(X_t)$.

We now assume that $\alpha$ is not $d$-exact. Then, $\mathbf a=[\alpha]$ defines a non-zero cohomology class in $H_{\text{dR}}^2(M;\mathbb C)$. Since the manifold $X_t$ is complex-\,$\mathcal C^\infty$-full at the second stage for every $t\neq 0$, it is possible to find $\eta^{p,q}_t\in\Omega^{p,q}_{J_t}(M)$, for $p+q=2$, such that 
\[
d\eta^{p,q}_t=0 \text{ \ and \ } \alpha=\eta^{2,0}_t+\eta^{1,0}_t+\eta^{0,2}_t+d\gamma\,,
\]
where $\gamma=\gamma^{1,0}_t+\gamma^{0,1}_t\in\Omega^{1,0}_{J_t}(M)\oplus\Omega^{0,1}_{J_t}(M)=\Omega^1_{\mathbb C}(M)$. Comparing the previous expression with~\eqref{alpha-decomp}, one gets
$\alpha^{2,0}_t=\eta^{2,0}_t+\partial_t\gamma^{1,0}_t$. This gives $d\alpha^{2,0}_t=0$ as a consequence of ii), thus $[\alpha^{2,0}_t]\in H_{BC}^{2,0}(X_t)$.

We are left with showing that if we consider $[\alpha], [\tilde{\alpha}]\in H^{2,0}_{BC}(X_0)$ such that $[\alpha] \neq [\tilde{\alpha}]$, then $[\alpha^{2,0}_t]\neq [\tilde{\alpha}^{2,0}_t]$ in $H_{BC}^{2,0}(X_t)$ for $t$ sufficiently small. This follows from a continuity argument due to the uniqueness of the decomposition~\eqref{alpha-decomp} and the fact that there are no $\partial_t\bar{\partial}_t$-exact $(2,0)$-forms on $X_t$.

\end{proof}


\begin{remark}
The holomorphic deformations considered in Examples~\ref{ex:deformation-1-NLA} and~\ref{ex:deformation1-BC-preserved} are not complex-$\mathcal C^\infty$-full at the second stage. In the case of Example~\ref{ex:deformation-1-NLA}, one can check from the structure equations \eqref{ex:deformed-CSnilm-1} that the
$2$-form $\varphi^{23}-(1-\left|\mathbf t\right|^2)\varphi^{4\bar1}+{\mathbf t}\,\varphi^{\bar2\bar3}$ defines a de Rham cohomology class that cannot be represented by a form of pure degree $(2,0)$, $(1,1)$ or $(0,2)$. For the deformation in Example~\ref{ex:deformation1-BC-preserved}, one proceeds similarly from~\eqref{ex:deformed-CSnilm-1-v2} considering the $2$-form $\varphi^{23}-\varphi^{4\bar1}$.
\end{remark}

\begin{remark}
A direct computation shows that the deformation considered in Example \ref{example-iwasawa} (for $t_{11}=0$ and $t_{22}\neq 0$) does not satisfy the hypotheses of Theorems \ref{thm-Guan}, \ref{thm:stability-1} and \ref{thm:stability-2} even though the existence of complex symplectic structures is preserved. This shows that such hypothesis are sufficient but not necessary to ensure stability. 
\end{remark}

\section{Complex symplectic structures  not closed under deformations}\label{non-closed}

In this section we prove that the existence of a complex symplectic structure is not a closed property under small deformations. 

\begin{theorem}\label{hs-no-cerrada}
There exists a holomorphic family of compact complex manifolds $\{X_{t}\}_{t\in\Delta}$ of complex
dimension $4$, where $\Delta=\{t\in\mathbb C\mid |t|<1\}$, such that $X_{t}$ admits a complex
symplectic structure for every $t\in\Delta\setminus\{0\}$, but $X_{0}$ is not a complex symplectic manifold.
\end{theorem}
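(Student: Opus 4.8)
The plan is to exhibit an explicit holomorphic family on a nilmanifold (or a product of one with a torus), arranging that the central fiber fails the necessary cohomological condition for a complex symplectic form while all nearby fibers satisfy it. Concretely, I would look for an $8$-dimensional nilmanifold $\Gamma\backslash G$ carrying an invariant complex structure $J_0$ which admits a closed non-degenerate $(2,0)$-form, together with an invariant complex structure $J_\bullet$ (the ``generic'' member of the family) whose structure equations degenerate to those of $J_0$ at $t=0$ --- i.e.\ a curve $t\mapsto J_t$ with $J_t\to J_0$ as $t\to 0$, but in which the roles of $X_0$ and the deformed fibers are \emph{reversed} compared to Example~\ref{ex:deformation-1-NLA}. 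Since all the relevant manifolds are nilmanifolds with $H^\bullet_{dR}\cong H^\bullet(\mathfrak g^*)$ (Nomizu), Proposition~\ref{prop:BFLM} lets me test the existence of a complex symplectic structure purely at the Lie algebra level, by writing the general invariant $(2,0)$-form $\Omega_{\mathbb C}=\sum_{i<j}\lambda_{ij}\varphi^{ij}_t$, imposing $d\Omega_{\mathbb C}=0$, and checking whether $\Omega_{\mathbb C}\wedge\Omega_{\mathbb C}$ can be made non-zero. The construction succeeds precisely when, for $t\neq 0$, the closedness conditions leave enough free parameters for non-degeneracy, but at $t=0$ they kill a coefficient (such as $\theta$ above) that is indispensable for $\Omega_{\mathbb C}\wedge\Omega_{\mathbb C}\neq 0$.

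The natural candidate is obtained by a mild modification of the structure equations~\eqref{ex:initial-CSnilm-1}: keep $d\varphi^1=d\varphi^2=0$ but take, say, $d\varphi^3_t=\varphi^{1\bar1}_t+ t\,\varphi^{12}_t$ (or a similarly placed $t$-term) and $d\varphi^4_t=\varphi^{12}_t$, so that at $t=0$ one recovers the complex symplectic nilmanifold of Example~\ref{ex:deformation-1-NLA} and for $t\neq 0$ the extra term $t\,\varphi^{12}_t$ breaks the obstruction that forced $\theta=0$. One then verifies three things: (1) the family $\{J_t\}$ is indeed a holomorphic family in the sense of Kodaira--Spencer, which amounts to writing each $J_t$ via a $(1,0)$-coframe $\varphi^k_t$ depending holomorphically on $t$ and checking integrability (equivalently, that the listed $d\varphi^k_t$ satisfy $d^2=0$ and have no $(0,2)$-component); (2) for $t\neq 0$ the system $d\Omega_{\mathbb C}=0$ admits a solution with $\Omega_{\mathbb C}\wedge\Omega_{\mathbb C}\neq 0$, hence $X_t$ is complex symplectic (invariantly, then globally by Proposition~\ref{prop:BFLM}); (3) at $t=0$ the same system forces $\Omega_{\mathbb C}\wedge\Omega_{\mathbb C}=0$, so $X_0$ carries no invariant complex symplectic form, and again by Proposition~\ref{prop:BFLM} none at all. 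If a single nilmanifold of complex dimension $4$ does not quite fit, I would instead take $X_t = Z_t\times\mathbb T$ with $Z_t$ a complex-dimension-$3$ nilmanifold (as in Example~\ref{example-iwasawa}), where the torus factor supplies the extra $(1,0)$-form $\varphi^4$ needed to reach complex dimension $4$ while preserving the Proposition~\ref{prop:BFLM} hypothesis.

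The main obstacle is a bookkeeping one: finding the right placement of the deformation parameter $t$ so that \emph{closedness for $t\neq 0$} and \emph{failure of closedness-plus-non-degeneracy at $t=0$} hold simultaneously, and so that the resulting family is genuinely holomorphic rather than merely a smooth family of almost complex structures. This is delicate because adding a $t$-term to the structure equations typically changes $H^{\bullet}_{dR}\cong H^{\bullet}(\mathfrak g^*)$-compatible cocycle conditions in several places at once (via the $d^2=0$ constraint on the other $\varphi^k_t$), so one must track how $d\Omega_{\mathbb C}=0$ propagates across all basis $3$-forms $\varphi^{ijk}_t$, $\varphi^{ij\bar k}_t$. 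Once the structure equations are pinned down, steps (2) and (3) are short linear-algebra computations: at $t\neq 0$ one exhibits an explicit $\Omega_{\mathbb C}$ (analogous to the $\alpha\varphi^{12}_t+\beta\varphi^{13}_t+\gamma\varphi^{14}_t+\theta\varphi^{24}_t$ of Example~\ref{ex:deformation1-BC-preserved}) with a non-degeneracy inequality like $\beta\theta\neq 0$ that is satisfiable, whereas at $t=0$ the closedness equations collapse the span of invariant closed $(2,0)$-forms to an isotropic subspace for the wedge pairing, giving $\Omega_{\mathbb C}\wedge\Omega_{\mathbb C}=0$ identically. Assembling these verifications yields the desired holomorphic family and proves the theorem.
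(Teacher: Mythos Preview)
Your overall framework---work on a nilmanifold, invoke Nomizu plus Proposition~\ref{prop:BFLM} to reduce to the Lie algebra, then solve the linear system $d\Omega_{\mathbb C}=0$ and test non-degeneracy---is exactly the approach the paper takes. But the concrete candidate you propose does not do what you claim, and the gap is not mere bookkeeping.

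First, your description is internally inconsistent: you say you want the roles of $X_0$ and $X_t$ \emph{reversed} relative to Example~\ref{ex:deformation-1-NLA}, yet you then write that ``at $t=0$ one recovers the complex symplectic nilmanifold of Example~\ref{ex:deformation-1-NLA}''. That is the wrong way around for this theorem: you need $X_0$ to \emph{fail} to be complex symplectic. Second, and more seriously, your proposed modification $d\varphi^3_t=\varphi^{1\bar1}_t+t\,\varphi^{12}_t$, $d\varphi^4_t=\varphi^{12}_t$ does nothing useful. The added term $t\,\varphi^{12}_t$ is of type $(2,0)$, so it only alters $\partial\varphi^3_t$, not $\bar\partial\varphi^3_t$; consequently it has no effect on the closedness conditions for an invariant $(2,0)$-form. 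A direct computation shows that for your family $d\Omega_{\mathbb C}=0$ forces $\tau=\zeta=0$ for \emph{every} $t$, leaving $\Omega_{\mathbb C}^2=-2\beta\theta\,\varphi^{1234}_t$, which is non-degenerate whenever $\beta\theta\neq 0$---again for every $t$. So your family is complex symplectic throughout and exhibits neither non-stability nor non-closedness.

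The substantive content of the paper's proof is precisely the choice of example, and it is not a small perturbation of \eqref{ex:initial-CSnilm-1}. The paper takes as central fiber a different nilpotent complex structure, with $d\varphi^3=\varphi^{12}$ and $d\varphi^4=i\,\varphi^{1\bar1}+\varphi^{1\bar2}+\varphi^{2\bar1}$; the several $(1,1)$-terms in $d\varphi^4$ are what make the closedness system for $\Omega_{\mathbb C}$ overdetermined at $t=0$ (forcing $\gamma=\zeta=\theta=0$, hence $\Omega_{\mathbb C}^2=0$). The deformation is then taken in the $\varphi^{\bar1},\varphi^{\bar2}$ directions (i.e.\ genuine $(0,1)$-directions, producing a holomorphic family), and the resulting structure equations yield the single relation $t\beta-i\theta=0$, which for $t\neq 0$ is solvable with $\beta\theta\neq 0$. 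Your plan identifies the right method but not a working instance; the missing idea is that one must start from a central fiber whose $(1,1)$-part of $d\varphi^k$ already obstructs any closed non-degenerate $(2,0)$-form, and then deform along $(0,1)$-directions so that the new $\bar\partial$-terms relax that obstruction.
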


\begin{proof}
Let $X_{0}=(M,J_{0})$ be an $8$-dimensional nilmanifold endowed with an invariant complex structure defined
by the structure equations:
\[
\left\{\begin{array}{l}
d\varphi^1 \ = \ d\varphi^2 \ = \ 0,\\
d\varphi^3 \ = \ \varphi^{12},\\
d\varphi^4 \ = \ i\,\varphi^{1\bar1}+\varphi^{1\bar2}+\varphi^{2\bar1}.
\end{array}\right.
\]
Note that the $(0,1)$-forms $\varphi^{\bar1}$ and $\varphi^{\bar 2}$ define non-zero Dolbeault cohomology classes on~$X_{0}$. Therefore, they provide appropriate directions to perform deformations. For each $t\in\mathbb C$ such that $|t|<1$, define an invariant complex structure $J_{t}$ on $M$ given by the following
basis of $(1,0)$-forms:
\[
\eta_t^1=\varphi^1+t\,\varphi^{\bar1}-i\,t\,\varphi^{\bar2}, \quad \eta_t^2=\varphi^2, \quad 
    \eta_t^3=\varphi^3, \quad \eta_t^4=\varphi^4\,.
\]
The complex structure equations for $X_{t}=(M,J_{t})$ are
\begin{equation}\label{def-hs-no-cerrada}
\left\{\begin{array}{l}
d\eta_t^1 \ = \ d\eta_t^2 \ = \ 0,\\
d\eta_t^3 \ = \ \frac{1}{1-|t|^2}\,\big(\eta_t^{12}+t\,\eta_t^{2\bar1}-i\,t\,\eta_t^{2\bar2}\big),\\
d\eta_t^4 \ = \ \frac{1}{1-|t|^2}\,\big(2\,\bar t\,\eta_t^{12}+i\,\eta_t^{1\bar1}
   +\eta_t^{1\bar2}+\eta_t^{2\bar1}-i\,t\,\eta_t^{2\bar2} \big).
\end{array}\right.
\end{equation}
Let us remark that we recover $X_{0}$, for $t=0$. A generic invariant $(2,0)$-form is
\[
\Omega=\alpha\,\eta_t^{12}+\beta\,\eta_t^{13}+\gamma\,\eta_t^{14}+\tau\,\eta_t^{23}
    +\theta\,\eta_t^{24}+\zeta\,\eta_t^{34}\,,
\]
where $\alpha,\beta,\gamma,\tau,\theta,\zeta\in\mathbb C$. It is non-degenerate if and only if $\alpha \zeta+\gamma\,\tau-\beta\,\theta\neq 0$. We compute
\[
\begin{array}{lcl}
d\Omega &=& -\,\frac{1}{1-|t|^2}\Big( 2\,\bar t\,\zeta\,\eta_t^{123} - \zeta\,\eta_t^{124} +
  (t\,\beta-i\,\theta+\gamma)\,\eta_t^{12\bar1} - i\,(t\,\beta-i\,\theta+t\,\gamma)\,\eta_t^{12\bar2}\\
&& -\,i\,\zeta\eta_t^{13\bar1}-\zeta\,\eta_t^{13\bar2}-\zeta\,\eta_t^{23\bar1}+i\,\zeta\,t\,\eta_t^{23\bar2}
  +t\,\zeta\,\eta_t^{24\bar1}-i\,t\,\zeta\,\eta_t^{24\bar2}\Big)\,.
\end{array}
\]
Imposing $d\Omega=0$, it is straightforward to see that $\zeta=0$ must hold. Therefore, one simply needs to solve the system of equations
\[
\begin{cases}
t\,\beta-i\,\theta+\gamma=0\\
(1-t)\,\gamma=0
\end{cases}
\]
Since $t\neq 1$, it is clear that $\gamma=0$. The problem is then reduced to a single equation
\[
t\,\beta-i\,\theta=0,\quad \text{where}\ \ \beta\,\theta\neq 0\,.
\]
If $t=0$, then $\theta=0$, violating non-degeneracy. As a consequence, there are no complex symplectic structures on $X_{0}=(M,J_{0})$. However, the complex manifold $X_{t}=(M,J_{t})$ with $t\neq 0$  admits complex symplectic structures
\[
\Omega=\alpha\,\eta_t^{12}+\frac it\,\theta\,\eta_t^{13}+\tau\,\eta_t^{23}+\theta\,\eta_t^{24}\,,
\]
where $\alpha,\,\tau,\,\theta\in\mathbb C$ and $\theta\neq 0$.
\end{proof}

\section{Cohomological properties of complex symplectic manifolds}\label{cohomology}

In this section we tackle the cohomological behavior of complex symplectic manifolds.

\subsection{Betti, Hodge and Bott-Chern numbers}

We start by proving the following topological obstruction to the existence of a complex symplectic structure on a compact smooth manifold. Recall that the Aeppli cohomology of a complex manifold $X=(M,J)$ is
\[
H^{\bullet,\bullet}_{A}(X)\coloneqq\frac{\text{Ker}\,\partial\overline\partial}{\text{Im}\,\partial+\text{Im}\,\overline\partial}\,.	
\]

\begin{theorem}\label{th:cohomologies}
Let $M$ be a $4n$-dimensional compact smooth manifold and let $(J,\omega_\mathbb{C})$ be a complex symplectic structure on $M$ and put $X=(M,J)$. For all $0\leq k,m\leq n$ $\omega_\mathbb{C}^k\wedge\bar \omega_\mathbb{C}^m$ defines a non trivial cohomology class in 
\[
H^{2(k+m)}_{dR}(M)\,, \ H^{2k,2m}_{\delbar}(X)\,, \ H^{2k,2m}_{\del}(X)\,, \ H^{2k,2m}_{BC}(X) \ \mathrm{and} \ H^{2k,2m}_{A}(X) 
\]
for all $0\leq k,m\leq n$. In particular,
\begin{itemize}
\item $h^{2k,2m}_{\delbar}(X)\geq 1$\,,
\item $h^{2k,2m}_{\del}(X)\geq 1$\,,
\item $h^{2k,2m}_{BC}(X)\geq 1$\,,
\item $h^{2k,2m}_{A}(X)\geq 1$\,,
\end{itemize}
for all $0\leq k,m\leq n$.
\end{theorem}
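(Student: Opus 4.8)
The plan is to exploit the fact that the canonical bundle is trivialized by $\omega_{\mathbb C}^n$, which forces $\omega_{\mathbb C}^n\wedge\bar\omega_{\mathbb C}^n$ to be a nowhere-vanishing $(2n,2n)$-form, hence a nonzero multiple of the volume form. First I would record that $\omega_{\mathbb C}$ is $d$-closed of type $(2,0)$ with respect to $J$, so $\bar\omega_{\mathbb C}$ is $d$-closed of type $(0,2)$, and therefore $\omega_{\mathbb C}^k\wedge\bar\omega_{\mathbb C}^m$ is a $d$-closed (in particular $\partial$-closed and $\bar\partial$-closed) form of type $(2k,2m)$ for all $k,m$. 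Thus it defines classes in all five cohomologies simultaneously; the only issue is nontriviality.

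The key observation is a Poincaré-duality / pairing argument. For the de Rham class: suppose $\omega_{\mathbb C}^k\wedge\bar\omega_{\mathbb C}^m = d\eta$ for some $(2(k+m)-1)$-form $\eta$. Then wedging with the $d$-closed form $\omega_{\mathbb C}^{n-k}\wedge\bar\omega_{\mathbb C}^{n-m}$ and integrating over $M$ gives
\[
\int_M \omega_{\mathbb C}^n\wedge\bar\omega_{\mathbb C}^n = \int_M d\eta\wedge\omega_{\mathbb C}^{n-k}\wedge\bar\omega_{\mathbb C}^{n-m} = \int_M d\big(\eta\wedge\omega_{\mathbb C}^{n-k}\wedge\bar\omega_{\mathbb C}^{n-m}\big) = 0
\]
by Stokes, contradicting that the left-hand side is a nonzero multiple of $\int_M \mathrm{vol} \neq 0$. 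The same computation handles Bott-Chern: if $\omega_{\mathbb C}^k\wedge\bar\omega_{\mathbb C}^m = \partial\bar\partial\xi$, pairing against $\omega_{\mathbb C}^{n-k}\wedge\bar\omega_{\mathbb C}^{n-m}$ and using that this latter form is both $\partial$- and $\bar\partial$-closed lets one write $\partial\bar\partial\xi\wedge\omega_{\mathbb C}^{n-k}\wedge\bar\omega_{\mathbb C}^{n-m}$ as an exact form, again integrating to zero. For Dolbeault $H^{2k,2m}_{\bar\partial}$: if $\omega_{\mathbb C}^k\wedge\bar\omega_{\mathbb C}^m = \bar\partial\psi$ with $\psi$ of type $(2k,2m-1)$, wedge with $\omega_{\mathbb C}^{n-k}\wedge\bar\omega_{\mathbb C}^{n-m}$; since $\bar\partial(\omega_{\mathbb C}^{n-k}\wedge\bar\omega_{\mathbb C}^{n-m})=0$ and the wedge lands in top degree (where $\bar\partial$ and $d$ agree up to the $(2n,2n)$ slot), one gets $\omega_{\mathbb C}^n\wedge\bar\omega_{\mathbb C}^n = \bar\partial(\psi\wedge\omega_{\mathbb C}^{n-k}\wedge\bar\omega_{\mathbb C}^{n-m})$, and integration of a $\bar\partial$-exact $(2n,2n)$-form over a compact complex manifold vanishes. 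The $\partial$-cohomology case is identical by conjugation (or symmetry), and Aeppli follows because $H^{2k,2m}_{A}$ receives a nonzero class whenever the corresponding de Rham or Bott-Chern obstruction is nonzero — concretely, a relation $\omega_{\mathbb C}^k\wedge\bar\omega_{\mathbb C}^m = \partial a + \bar\partial b$ again wedges with $\omega_{\mathbb C}^{n-k}\wedge\bar\omega_{\mathbb C}^{n-m}$ to produce $\omega_{\mathbb C}^n\wedge\bar\omega_{\mathbb C}^n$ as a sum of a $\partial$-exact and a $\bar\partial$-exact top form, each integrating to zero.

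The one point requiring a little care is that $\omega_{\mathbb C}^n\wedge\bar\omega_{\mathbb C}^n$ is genuinely nonzero as a top form: since $(J,\omega_{\mathbb C})$ is complex symplectic, $\omega_{\mathbb C}^n$ is a nowhere-zero section of the canonical bundle $K_X = \Omega^{2n,0}_J$, so $\omega_{\mathbb C}^n\wedge\overline{\omega_{\mathbb C}^n}$ is a nowhere-zero $(2n,2n)$-form, equal to $c\cdot\mathrm{vol}$ for a nowhere-zero function $c$ with constant sign in a suitable orientation; hence $\int_M\omega_{\mathbb C}^n\wedge\bar\omega_{\mathbb C}^n\neq 0$. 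Granting this, the main obstacle is purely bookkeeping: making sure in each of the five cases that the product of the exactness witness with $\omega_{\mathbb C}^{n-k}\wedge\bar\omega_{\mathbb C}^{n-m}$ is again exact in the appropriate sense (using closedness of $\omega_{\mathbb C}$ and $\bar\omega_{\mathbb C}$ and a Leibniz rule), so that the integral vanishes. Finally, the stated inequalities $h^{\bullet,\bullet}_\sharp(X)\geq 1$ are an immediate consequence, and the Betti number bounds in Corollary~\ref{co:bettinumbers} will follow by counting the linearly independent de Rham classes $[\omega_{\mathbb C}^k\wedge\bar\omega_{\mathbb C}^m]$ in each even degree — which are independent because distinct $(k,m)$ give forms of distinct $J$-bidegree, all nontrivial by the above.
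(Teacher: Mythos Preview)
Your proof of the theorem is correct and is essentially the paper's argument: both wedge an assumed exactness witness against the complementary power $\omega_{\mathbb C}^{n-k}\wedge\bar\omega_{\mathbb C}^{n-m}$ (which is closed for $d$, $\partial$, $\bar\partial$) to force $\omega_{\mathbb C}^n\wedge\bar\omega_{\mathbb C}^n$ to be exact in the same sense, contradicting that it is a volume form. The paper packages the de Rham, Dolbeault, $\partial$ and Bott--Chern cases as ``the wedge product descends to cohomology and the top class is nonzero'', treating Aeppli separately by the explicit Leibniz computation you use uniformly; the two presentations are equivalent.

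One caution on your closing remark about the corollary: arguing that the de Rham classes $[\omega_{\mathbb C}^k\wedge\bar\omega_{\mathbb C}^m]$ with $k+m=l$ are linearly independent ``because distinct $(k,m)$ give forms of distinct $J$-bidegree'' is not sufficient on a general complex manifold, since $H^{2l}_{dR}(M;\mathbb C)$ need not decompose by bidegree. The paper fills this by passing through the natural linear map $H^{2l}_{dR}(M;\mathbb C)\to\sum_{k+m=l}H^{2k,2m}_A(X)$ and invoking the Aeppli nontriviality just established; you could equally well use your integration pairing against each $\omega_{\mathbb C}^{n-k}\wedge\bar\omega_{\mathbb C}^{n-m}$ to separate the classes.
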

\begin{proof}
First of all, note that $\omega_\mathbb{C}^n\wedge\bar\omega_\mathbb{C}^n$ is a volume form on $M$. Hence, its de Rham cohomology class is non-zero. We show now first that also all other cohomology classes of $\omega_\mathbb{C}^n\wedge\bar \omega_\mathbb{C}^n$ are non-zero.
		
Start with $H^{2n,2n}_{\delbar}(X)$ and assume the contrary. Then there exists some $\alpha\in \Omega^{2n,2n-1}(X)$ with $\omega_\mathbb{C}^n\wedge\bar \omega_\mathbb{C}^n=\delbar \alpha$. However, for bidegree reasons, $\del\alpha=0$ and so

\[
d\alpha=\del \alpha+\delbar\alpha=\delbar\alpha=\omega_\mathbb{C}^n\wedge\bar \omega_\mathbb{C}^n\,.
\]
Thus the de Rham cohomology class would be zero as well, a contradiction.
		
A similar argument shows $0\neq [\omega_\mathbb{C}^n\wedge\bar \omega_\mathbb{C}^n]\in H^{2n,2n}_{\del}(X)$. But then there cannot be any $\alpha\in \Omega^{\bullet}(X,\bC)$ with $\del \alpha=\omega_\mathbb{C}^n\wedge\bar \omega_\mathbb{C}^n$. Hence, also $0\neq [\omega_\mathbb{C}^n\wedge\bar \omega_\mathbb{C}^n]\in H^{2n,2n}_{BC}(X)$.
		
Finally, assume that $\omega_\mathbb{C}^n\wedge\bar \omega_\mathbb{C}^n$ defines the trivial cohomology class in Aeppli cohomology. Then $\omega_\mathbb{C}^n\wedge\bar \omega_\mathbb{C}^n=\del \alpha+\delbar \beta$ for $\alpha\in  \Omega^{2n-1,2n}(X)$ and $\beta\in \Omega^{2n,2n-1}(X)$. As above, for bidegree reasons, $\del\alpha=d\alpha$ and $\delbar\beta=d\beta$ and so $\omega_\mathbb{C}^n\wedge\bar \omega_\mathbb{C}^n=d(\alpha+\beta)$, again a contradiction. Hence, also the Aeppli cohomology class of $\omega_\mathbb{C}^n\wedge\bar \omega_\mathbb{C}^n$ is non-zero.
		
These observations imply that for all $0\leq k,m\leq n$ the differential form $\omega_\mathbb{C}^k\wedge\bar \omega_\mathbb{C}^m$ defines non-trivial cohomology classes in all the mentioned cohomologies except the Aeppli cohomology since for all these cohomologies the wedge product on differential forms descends to a product on the cohomology spaces.
		
Finally, assume that for certain $0\leq k,m\leq n$, the Aeppli cohomology class of $\omega_\mathbb{C}^k\wedge\bar \omega_\mathbb{C}^m$ is zero. Then there exists $\alpha\in \Omega^{2k-1,2m}(X)$ and $\beta\in \Omega^{2k,2m-1}(X)$ with $\omega_\mathbb{C}^k\wedge\bar \omega_\mathbb{C}^m=\del \alpha+\delbar \beta$. Since $\omega_{\bC}$ and $\bar\omega_{\bC}$ are both $\del$- and $\delbar$-closed, we then have
\[
\omega_\mathbb{C}^n\wedge\bar \omega_\mathbb{C}^n=\omega_\mathbb{C}^{n-k}\wedge\bar \omega_\mathbb{C}^{n-m}\wedge (\del \alpha+\delbar \beta)=\del(\omega_\mathbb{C}^{n-k}\wedge\bar \omega_\mathbb{C}^{n-m}\wedge \alpha)+\delbar(\omega_\mathbb{C}^{n-k}\wedge\bar \omega_\mathbb{C}^{n-m}\wedge \beta)\,.
\] 
Hence, the Aeppli cohomology class of $\omega_\mathbb{C}^n\wedge\bar \omega_\mathbb{C}^n$ would also be zero, a contradiction.
\end{proof}

By Theorem \ref{th:cohomologies}, for any fixed $l\in \{0,\ldots,2n\}$ the de Rham cohomologie classes $[\omega_{\bC}^k\wedge \bar \omega_{\bC}^m]_{dR}\in H^{2l}_{dR}(X,\bC)$ with $k+m=l$ are non-trivial. In fact, these de Rham cohomology classes are even linearly independent since it is well-known that the identity map induces a well-defined linear map 
\[
H^{2l}_{dR}(X,\bC)\rightarrow \sum_{0\leq k,m\leq n, k+m=l} H^{2k,2m}_A(X)
\]
and the Aeppli cohomology classes $[\omega_{\bC}^k\wedge \bar \omega_{\bC}^m]_{A}$ with $k+m=l$ are non-zero by Theorem \ref{th:cohomologies} and so are linearly independent. Hence, we obtain the following inequalities for the Betti numbers of $X$:
	
\begin{corollary}\label{co:bettinumbers}
Let $M$ be a $4n$-dimensional compact smooth manifold admitting a complex symplectic structure. Then
\begin{equation*}
b_{2k}(M)\geq k+1,\qquad b_{2l}(M)\geq 2n-l+1
\end{equation*}
for every $k=1,\ldots, n$ and every $l=n+1,\ldots,2n-1$.
	\end{corollary}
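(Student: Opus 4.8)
The plan is to feed the non-vanishing of the Aeppli classes from Theorem~\ref{th:cohomologies} into the natural linear map from de Rham to Aeppli cohomology recalled just before the statement, and then to carry out an elementary lattice-point count.

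First I would exploit that $\omega_{\bC}$ has pure bidegree $(2,0)$ and $\bar\omega_{\bC}$ pure bidegree $(0,2)$, so that $\omega_{\bC}^k\wedge\bar\omega_{\bC}^m$ has pure bidegree $(2k,2m)$. Consequently, under the well-defined linear map $H^{2l}_{dR}(X,\bC)\to\bigoplus_{p+q=2l}H^{p,q}_{A}(X)$ sending $[\alpha]$ to $\sum_{p+q=2l}[\alpha^{p,q}]_{A}$, the class $[\omega_{\bC}^k\wedge\bar\omega_{\bC}^m]_{dR}$ with $k+m=l$ maps to the single-summand vector $[\omega_{\bC}^k\wedge\bar\omega_{\bC}^m]_{A}\in H^{2k,2m}_{A}(X)$, which is non-zero by Theorem~\ref{th:cohomologies}. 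As $(k,m)$ ranges over all pairs with $k+m=l$ and $0\le k,m\le n$, these images lie in pairwise distinct summands and are each non-zero, hence linearly independent; therefore the de Rham classes $[\omega_{\bC}^k\wedge\bar\omega_{\bC}^m]_{dR}$ are themselves linearly independent in $H^{2l}_{dR}(X,\bC)$. Since $b_{2l}(M)=\dim_{\bC}H^{2l}_{dR}(X,\bC)$, I obtain
\[
b_{2l}(M)\ \geq\ \#\bigl\{(k,m)\in\bZ_{\geq 0}^{2} \ :\ k+m=l,\ k\leq n,\ m\leq n\bigr\}.
\]

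The proof then reduces to evaluating this count. For $1\leq l\leq n$ the condition $m=l-k\leq n$ is automatic and $k$ ranges over $0,1,\dots,l$, so the cardinality equals $l+1$; specializing $l=k$ gives $b_{2k}(M)\geq k+1$ for $k=1,\dots,n$. For $n\leq l\leq 2n$ the binding constraints are $k\leq n$ and $k=l-m\geq l-n$, so $k$ ranges over $l-n,\dots,n$, giving cardinality $2n-l+1$; restricting to $l=n+1,\dots,2n-1$ gives $b_{2l}(M)\geq 2n-l+1$.

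I do not expect a genuine obstacle here. The only delicate point is the well-definedness of the map into Aeppli cohomology: each $(p,q)$-component of a $d$-closed form is $\partial\bar\partial$-closed (apply $\bar\partial$ to the bidegree components of $d\alpha=0$ and use $\bar\partial\partial=-\partial\bar\partial$), and the $(p,q)$-components of a $d$-exact form lie in $\mathrm{Im}\,\partial+\mathrm{Im}\,\bar\partial$ --- but this is exactly the standard fact invoked in the paragraph preceding the statement, so the whole argument comes down to the lattice-point bookkeeping above together with the non-vanishing supplied by Theorem~\ref{th:cohomologies}.
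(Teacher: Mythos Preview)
Your proposal is correct and follows essentially the same approach as the paper: both use the natural map from de Rham to Aeppli cohomology to deduce linear independence of the classes $[\omega_{\bC}^k\wedge\bar\omega_{\bC}^m]_{dR}$ from the non-vanishing of their Aeppli images in distinct summands, then read off the Betti-number bounds. You are simply more explicit than the paper about the well-definedness of the map and the lattice-point count.
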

	
\begin{remark}
Using the above result, we can recover which compact complex surfaces modelled by solvmanifolds admit complex symplectic structures. Let $X$ be a compact complex surface diffeomorphic to a solvmanifold. Then by \cite[Theorem 1]{hasegawa}, $X$ is either a complex torus, or a hyperelliptic surface, or a Inoue surface of type $S^M$, or a primary Kodaira surface, or a secondary Kodaira surface, or a Inoue surface of type $S_\pm$, and, as such, it is endowed with a left-invariant complex structure. It is well-known that the torus and the primary Kodaira surface admit a complex symplectic structure. Moreover, the Inoue surface of type $S^M$, the secondary Kodaira surface and the Inoue surface of type $S_\pm$ have $b_2=0$, hence they do not admit any symplectic structure. Finally, despite having $b_2=2$, the hyperelliptic surface can not admit any complex symplectic structure, since $h^{2,0}_{\delbar}=0$ by \cite{angella-dloussky-tomassini}.
	\end{remark}

\subsection{Fr\"olicher spectral sequence}

The degeneration of the Fr\"olicher spectral sequence seems to be unrelated to the 
existence of complex symplectic structures. For the Iwasawa manifold, 
it is known that $E_1\ncong E_2\cong E_{\infty}$ (see for instance \cite{ceballos-otal-ugarte-villacampa}). Consequently, the manifold $Y_0$ defined in Example~\ref{example-iwasawa} has the same behaviour. We next provide a complex nilmanifold admitting a complex symplectic structure whose Fr\"olicher spectral sequence does not degenerate at the second stage.

\smallskip
Let $X$ be the complex nilmanifold defined by the complex structure equations
\[
d\omega^1=0\,,\quad d\omega^2=\omega^{1\bar1}\,,\quad d\omega^3=\omega^{2\bar1}\,,\quad
d\omega^4=\omega^{3\bar1}\,.
\]
It is easy to check that $\Omega=\omega^{14}-\omega^{23}$ is a complex symplectic structure on $X$. We next compute the spaces $E^{0,2}_r(X)$, which have the following explicit description~\cite[Theorem 1, Corollary 6]{cordero-fernandez-gray-ugarte}:
\[
E^{0,2}_r(X)={\mathcal X}^{0,2}_r(X)/{\mathcal Y}^{0,2}_r(X)\,,
\]
where ${\mathcal Y}^{0,2}_r(X)=\db(\Omega^{0,1}(X))$ for every $r\geq 1$, and
\begin{equation*}
\begin{split}
{\mathcal X}^{0,2}_1(X)&= \{\alpha_{0,2} \in \Omega^{0,2}(X) \mid \db\alpha_{0,2}=0\},\\ 
{\mathcal X}^{0,2}_2(X)&= \{\alpha_{0,2} \in \Omega^{0,2}(X) \mid \db\alpha_{0,2}=\partial\alpha_{0,2}+\db\alpha_{1,1}=0\},\\
{\mathcal X}^{0,2}_3(X) &= \{\alpha_{0,2} \in \Omega^{0,2}(X) \mid \db\alpha_{0,2}=\partial\alpha_{0,2}+\db\alpha_{1,1}=\partial\alpha_{1,1}+\db\alpha_{2,0}=0\},\\
{\mathcal X}^{0,2}_r(X)&= \{\alpha_{0,2} \in \Omega^{0,2}(X) \mid \db\alpha_{0,2}=\partial\alpha_{0,2}+\db\alpha_{1,1}=\partial\alpha_{1,1}+\db\alpha_{2,0}=\partial\alpha_{2,0}=0\}
\end{split}
\end{equation*}
for every $r\geq 4$. Note that these spaces can be equivalently written as
\begin{equation*}
\begin{split}
{\mathcal X}^{0,2}_1(X)&= \{\alpha \in \Omega^{0,2}(X) \, \mid\,  \db\alpha=0 \},\\ 
{\mathcal X}^{0,2}_2(X)&= \{\alpha \in \mathcal{X}^{0,2}_1(X) \, \mid\,  
	\exists\beta\in\Omega^{1,1}(X) \text{ such that } \partial\alpha+\db\beta=0\},\\
{\mathcal X}^{0,2}_3(X) &= \{\alpha \in {\mathcal X}^{0,2}_2(X) \, \mid\,  
	\exists\gamma\in\Omega^{2,0}(X) \text{ such that } \partial\beta+\db\gamma=0\},\\
{\mathcal X}^{0,2}_{r\geq 4}(X)&= \{\alpha \in {\mathcal X}^{0,2}_3(X) \, \mid\,  
	\partial\gamma=0\}.
\end{split}
\end{equation*}
We first observe that ${\mathcal Y}^{0,2}_r(X)=\{0\}$, so $E^{0,2}_r(X)={\mathcal X}^{0,2}_r(X)$ for every $r\geq 1$. Moreover, note that any $(0,2)$-form
$$\alpha=a_{12}\omega^{\bar1\bar2}+a_{13}\omega^{\bar1\bar3}+a_{14}\omega^{\bar1\bar4}
+a_{23}\omega^{\bar2\bar3}-a_{24}\omega^{\bar2\bar4}+a_{34}\omega^{\bar3\bar4},$$
where $a_{i,j}\in\mathbb C$, satisfies $\db\alpha=0$. Consequently, 
$$E^{0,2}_1(X)=\Omega^{0,2}(X).$$ 
Moreover, one can check that 
$$\db\omega^{\bar1\bar2}=\partial \omega^{\bar1\bar2}=0,\qquad
\db\big(\omega^{\bar1\bar4}-\omega^{\bar2\bar3}\big)=
\partial(\omega^{\bar1\bar4}-\omega^{\bar2\bar3}\big)=0,$$
and also
\begin{equation*}
\begin{array}{llllll}
\db \omega^{\bar1\bar3} &=& 0;&&&\\[5pt]
\partial \omega^{\bar1\bar3} &+& \db \omega^{2\bar2} &=& 0;&\\[5pt]
 & & \partial \omega^{2\bar2} &+& \db (-\omega^{13})=  0;\\[5pt]
 & & & & \partial (-\omega^{13})=  0.\\
\end{array}
\end{equation*}
This implies $\omega^{\bar1\bar2}$, $\omega^{\bar1\bar3}$, $\omega^{\bar1\bar4}-\omega^{\bar2\bar3}\in\mathcal{X}^{0,2}_{r}(X)$ for all $r\geq 1$. It then suffices to study
$$\alpha=a_{14}(\omega^{\bar1\bar4}+\omega^{\bar2\bar3})+a_{24}\omega^{\bar2\bar4}
+a_{34}\omega^{\bar3\bar4}, \text{ with }a_{14}, a_{24}, a_{34}\in\mathbb C.$$ 
This element belongs to $\mathcal X^{0,2}_2$ if we are able to find $\beta=\sum_{1\leq r,s\leq 4}b_{rs}\omega^{r\bar s}$ such that
\begin{equation*}
\begin{split}
0=\partial\alpha+\db\beta &=b_{22}\omega^{1\bar1\bar2}+(b_{23}-2\,a_{14})\,\omega^{1\bar1\bar3}
	+(b_{24}-a_{24})\,\omega^{1\bar1\bar4}+a_{24}\,\omega^{1\bar2\bar3}
	-a_{34}\,\omega^{1\bar2\bar4}\\
& +b_{32}\,\omega^{2\bar1\bar2}+b_{33}\omega^{2\bar1\bar3}+b_{34}\omega^{2\bar1\bar4}
	+b_{42}\omega^{3\bar1\bar2}+b_{43}\omega^{3\bar1\bar3}+b_{44}\omega^{3\bar1\bar4}.
\end{split}
\end{equation*}
Note that the existence of such $\beta$ requires $a_{24}=a_{34}=0$. Therefore, 
$$E_2^{0,2}(X)=\langle \omega^{\bar1\bar2},\omega^{\bar1\bar3}, \omega^{\bar1\bar4}-\omega^{\bar2\bar3}, \omega^{\bar1\bar4}+\omega^{\bar2\bar3}\rangle.$$
Moreover for $\alpha=\omega^{\bar1\bar4}+\omega^{\bar2\bar3}$ we have
$$\beta=b_{11}\omega^{1\bar1}+b_{12}\omega^{1\bar2}+b_{13}\omega^{1\bar3}
+b_{14}\omega^{1\bar4}+b_{21}\omega^{2\bar1}+2\omega^{2\bar3}
+b_{31}\omega^{3\bar1}+b_{41}\omega^{4\bar1}.$$
The element $\alpha$ belongs to $\mathcal X^{0,2}_3(X)$ if one can find $\gamma=\sum_{1\leq i<j\leq 4}c_{ij}\omega^{ij}$ satisfying
$$0=\partial\beta+\db\gamma=-c_{13}\omega^{12\bar1}-2\omega^{12\bar2}-(c_{14}+c_{23})\omega^{13\bar1}-c_{24}\omega^{14\bar1}-c_{24}\omega^{23\bar1}-c_{34}\omega^{24\bar1}.$$
It is clear that such $\gamma$ do not exist, so $\alpha\notin\mathcal X^{0,2}_3(X)$. Therefore
$$E_3^{0,2}(X)=\langle \omega^{\bar1\bar2},\omega^{\bar1\bar3}, \omega^{\bar1\bar4}-\omega^{\bar2\bar3}\rangle=E_{\infty}^{0,2}(X).$$





\end{document}